\newtheorem{thm}{Theorem}[section]
\newtheorem{prop}[thm]{Proposition}
\newtheorem{cor}[thm]{Corollary}
\newtheorem{lem}[thm]{Lemma}
\newtheorem{conj}[thm]{Conjecture}
\newtheorem{question}[thm]{Question}
\newtheorem{defn}[thm]{Definition}
\renewcommand{\P}{\mathbb{P}}
\begin{document}
\pagestyle{plain}

\title{On the $1/3$--$2/3$ Conjecture
}
\author{
Emily J. Olson \\[-5pt]
\small Department of Mathematics, Millikin University,\\[-5pt]
\small Decatur, IL 62522, USA, {\tt ejolson@millikin.edu}
\and
Bruce E. Sagan\\[-5pt]
\small Department of Mathematics, Michigan State University,\\[-5pt]
\small East Lansing, MI 48824-1027, USA, {\tt sagan@math.msu.edu}
}

\date{\today\\[10pt]
	\begin{flushleft}
	\small Key Words:  $1/3$--$2/3$ Conjecture, $\alpha$-balanced, automorphism, dimension, lattice,  linear extension, pattern avoidance, poset, width, Young diagram
	                                       \\[5pt]
	\small AMS subject classification (2010):  06A07  (Primary)   05A20, 05D99 (Secondary)
	\end{flushleft}}

\maketitle

\begin{abstract}
Let $(P,\leq)$ be a finite poset (partially ordered set), where $P$ has cardinality $n$. Consider linear extensions of $P$ as permutations $x_1x_2\cdots x_n$ in one-line notation. For distinct elements $x,y\in P$, we define $\P(x\prec y)$ to be the proportion of linear extensions of $P$ in which $x$ comes before $y$. 
For $0\leq \alpha \leq \frac{1}{2}$, we say $(x,y)$ is an $\alpha$-balanced pair if $\alpha \leq \P(x\prec y) \leq 1-\alpha.$
The $1/3$--$2/3$ Conjecture states that every finite partially ordered set which is not a chain has a $1/3$-balanced pair.  We make progress on this conjecture by showing that it holds for certain families of posets.   These include  lattices such as the Boolean, set partition, and subspace lattices; partial orders that arise from a Young diagram; and some partial orders of dimension $2$.  We also consider various posets which satisfy the stronger condition of having a $1/2$-balanced pair.
For example, this happens when the poset has an automorphism with a cycle of length $2$. 
Various questions for future research are posed.

\end{abstract}

%%%%%%%%%%%%%%%%%%%%%%%%%%%%%%%%%%

\section{Introduction}

Let $(P,\leq)$ be a poset, and let $n$ be the cardinality of $P$. A \emph{linear extension} is a total order $x_1\prec x_2\prec \cdots \prec x_n$ on the elements of $P$ such that $x_i\prec x_j$ if $x_i<_P x_j$; more compactly, we can view a linear extension as a permutation $x_1x_2\cdots x_n$ in one-line notation. For distinct elements $x,y\in P$, we define $\P(x\prec y)$ to be the proportion of linear extensions of $P$ in which $x$ comes before $y$. 
For $0\le \alpha \le \frac{1}{2}$, we say $(x,y)$ is an \emph{$\alpha$-balanced pair} if 
\[ \alpha \leq \P(x\prec y) \leq 1-\alpha,\]
and that $P$ is \emph{$\alpha$-balanced} if it has some $\alpha$-balanced pair. Notice that if $(x,y)$ is $\alpha$-balanced, then $(y,x)$ is $\alpha$-balanced as well. 

\begin{conj}[The $1/3$--$2/3$ Conjecture]\label{conj}
Every finite partially ordered set that is not a chain has a $1/3$-balanced pair.
\end{conj}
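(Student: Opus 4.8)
The plan is to attack the full conjecture through the convex geometry of the order polytope, since purely combinatorial reductions are known to stall. To each poset $(P,\le)$ with $|P|=n$ I would associate the order polytope $\mathcal{O}(P)=\{f\colon P\to[0,1] : x\le_P y \Rightarrow f(x)\le f(y)\}$, whose normalized volume equals $e(P)/n!$, where $e(P)$ is the number of linear extensions. Because $P$ is not a chain, its width is at least $2$, so there exist incomparable $x,y$; since any comparable pair has $\P(x\prec y)\in\{0,1\}$, proving the conjecture amounts to producing an incomparable pair with $\P(x\prec y)\in[1/3,2/3]$. First I would translate these probabilities into ratios of volumes of the slices of $\mathcal{O}(P)$ obtained by fixing the coordinate $f(x)$, so that the position statistics of $x$ become a sequence of section volumes amenable to geometric inequalities.

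The central tool is the log-concavity of the height sequence. Fixing an element $x$, let $N_i$ denote the number of linear extensions in which $x$ occupies position $i$. The plan is to invoke the Alexandrov--Fenchel inequalities for mixed volumes (Stanley's theorem) to conclude that $(N_i)_i$ is log-concave, hence unimodal with controlled ratios of consecutive terms. Comparing the height sequences of an incomparable pair $x,y$ and combining them with a correlation inequality of FKG or Ahlswede--Daykin type, one extracts a pair whose probability is bounded away from $0$ and $1$. This is exactly the machinery of Kahn--Saks, which yields a universal $\alpha$-balanced pair with $\alpha=3/11$, later sharpened by Brightwell--Felsner--Trotter to $\alpha=(5-\sqrt{5})/10\approx 0.2764$.

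The hard part will be closing the gap between $0.2764$ and the optimal constant $1/3$. The conjecture is tight on the smallest non-chains --- for example the three-element ``V'' and ``$\Lambda$'' posets, where the best balanced pair achieves probability exactly $1/3$ --- so any successful argument must be sharp precisely where the convexity inequalities above are \emph{not}. My strategy here would be to isolate the near-extremal regime: show that if no pair were $1/3$-balanced, the height sequences would have to be so tightly concentrated that the slices of $\mathcal{O}(P)$ degenerate toward the extremal slab configurations, and then argue that such degeneracy forces a rigid local sub-configuration incompatible with the global structure of $P$.

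Making this dichotomy quantitative is the essential and, at present, unresolved obstacle: it would require either a sharpened Alexandrov--Fenchel-type inequality carrying an explicit defect term that measures the distance of $\mathcal{O}(P)$ from the extremal slabs, or an entropy-based refinement of the correlation step. I do not expect a short proof; indeed, it is precisely the absence of such a sharp general inequality that motivates the present paper to restrict to families --- lattices, Young-diagram posets, and symmetric posets --- where direct enumeration or an automorphism with a $2$-cycle sidesteps the need for the optimal constant altogether.
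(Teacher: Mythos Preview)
Your proposal is not a proof, and you acknowledge as much: the decisive step---closing the gap between the Brightwell--Felsner--Trotter constant $(5-\sqrt{5})/10$ and the target $1/3$---is identified as ``the essential and, at present, unresolved obstacle,'' and you offer only a heuristic dichotomy (degeneracy of the order-polytope slices forcing a rigid local configuration) without any mechanism to make it rigorous. Everything prior to that point is a faithful summary of the Kahn--Saks and Brightwell--Felsner--Trotter machinery, which the paper itself cites as the best known general bounds; none of it yields $1/3$.

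More to the point, the paper does \emph{not} prove this statement. It is recorded as Conjecture~\ref{conj}, an open problem, and the paper's contribution is to verify it for particular families (certain lattices, Young-diagram posets, some dimension-$2$ posets) by exhibiting automorphisms with a $2$-cycle, twin or almost-twin pairs, or direct hook-length computations. There is therefore no ``paper's own proof'' to compare against. Your plan is a reasonable sketch of where the general problem stands and why it is hard, but as a proof attempt it has a genuine gap exactly where you say it does, and nothing in the proposal suggests how a sharpened Alexandrov--Fenchel inequality with a usable defect term, or an entropy refinement of the correlation step, would actually be obtained.
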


We can see, for instance, that the conjecture holds for the poset $P$ depicted in Figure~\ref{fig:6elemExample_width2}. This poset has 15 linear extensions, which are
\[ \begin{array}{rcccl}
	123456, & 123465, & 123645, & 124356, & 124365, \\
	124536, & 142356, & 142365, & 142536, & 213456, \\
	213465, & 213645, &	214356, & 214365, & 214536.	
	\end{array} \]	
The matrix on the right in the figure has as its $(i,j)$ entry the number of linear extensions of $P$ where $i$ comes before $j$.  The entries in bold give the pairs $(i,j)$ whose number of linear extensions with $i\prec j$ is between $1/3(15)=5$ and $2/3(15)=10$, thus satisfying the conjecture.

Conjecture~\ref{conj} was first proposed by Kislitsyn~\cite{Kis68} in 1968, although a number of resources attribute it to Fredman~\cite{Fre76}   and it was also independently discovered by Linial~\cite{Lin84}.
There are many types of posets for which the conjecture has already been proven. This includes posets of up to $11$ elements~\cite{Pec06}, posets with height $2$~\cite{TGF92}, semiorders~\cite{Bri89}, posets with each element incomparable to at most $6$ others~\cite{Pec08}, $N$-free posets~\cite{Zag12}, and posets whose Hasse diagram is a tree~\cite{Zag16}. 
If the conjecture is true, the bounds are  best possible, as seen by the poset $T$ in Figure~\ref{fig:3elem1relation}.
While the proof of the $1/3$ bound for a general poset remains elusive, in 1984 Kahn and Saks~\cite{KS84} proved that for any poset $P$, there is some pair $x,y\in P$ such that $\frac{3}{11}< \P(x\prec y)<\frac{8}{11}$. In 1995, Brightwell, Felsner, and Trotter~\cite{BFT95} improved the bound to be $\frac{5-\sqrt{5}}{10}\leq \P(x\prec y) \leq \frac{5+\sqrt{5}}{10}$.
 In~\cite{BFT95}, Conjecture~\ref{conj} is described as ``one of the most intriguing problems in combinatorial theory". 
 The interested reader can refer to Brightwell's 1999 survey~\cite{Bri99} for more information.

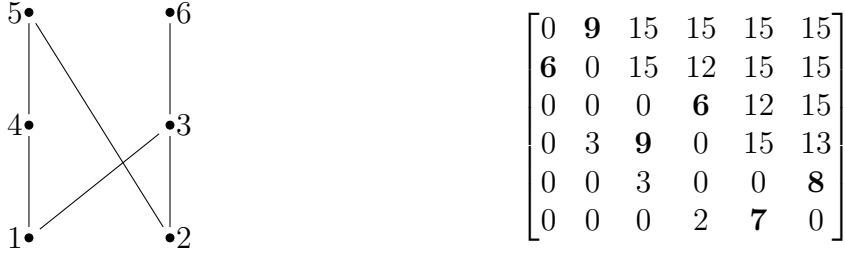
\begin{figure}[t!]
\centering
	\begin{minipage}{.45\textwidth}
		\centering
		\begin{tikzpicture}[scale=0.75]
		\filldraw (0,0) circle (2pt) node(v1){};
		\filldraw (0,2) circle (2pt) node(v4){};
		\filldraw (0,4) circle (2pt) node(v5){};
		\filldraw (2.5,0) circle (2pt) node(v2){};
		\filldraw (2.5,2) circle (2pt) node(v3){};
		\filldraw (2.5,4) circle (2pt) node(v6){};
		\draw (-0.25,0) node {$1$};
		\draw (-0.25,2) node {$4$};
		\draw (-0.25,4) node {$5$};
		\draw (2.75,0) node {$2$};
		\draw (2.75,2) node {$3$};
		\draw (2.75,4) node {$6$};
		\draw[-] (v1)--(v4)--(v5)--(v2)--(v3)--(v6);
		\draw[-] (v1)--(v3);
		\end{tikzpicture}	
	\end{minipage}
	\hspace{5mm}
	\begin{minipage}{.4\textwidth}
	%\end{doublespace}
    \[ \begin{bmatrix} 0&\textbf{9}&15&15&15&15 \\
	\textbf{6}&0&15&12&15&15 \\
	0&0&0&\textbf{6}&12&15 \\
	0&3&\textbf{9}&0&15&13 \\
	0&0&3&0&0&\textbf{8} \\
	0&0&0&2&\textbf{7}&0
    \end{bmatrix} \]
    %\begin{doublespace}
	\end{minipage}
	\caption{A poset $P$ with 6 elements and a matrix counting its linear extensions}
	\label{fig:6elemExample_width2}	
\end{figure}

An alternative way of talking about this conjecture is as follows.
 We define the \emph{balance constant} of $P$  to be
\[ \delta(P) = \max_{x,y\in P} \min\{\P(x\prec y), \P(y\prec x)\} \]
For any poset $P$ not a chain, it must be that $0< \delta(P)\leq 1/2$. In the example in Figure~\ref{fig:6elemExample_width2}, $P$ has a balance constant of $\frac{7}{15} \approx 0.4667$. 
So $P$ has a $1/3$-balanced pair if and only if $P$ has a balance constant $\delta(P)\geq 1/3$. We will use these two phrases interchangeably, as does the literature.

The following  notation will give us yet another way of discussing the conjecture. Let $E(P)$ be the set of linear extensions of $P$ and $e(P)$ be the cardinality of $E(P)$. If $(P,\leq)$ is a poset and $x,y\in P$, let $P+xy$ denote the poset $(P,\leq')$, where $\leq'$ is the transitive closure of $\leq$ extended by the relation $x<y$.  So $\P(x\prec y)=e(P+xy)/e(P)$.  Note also that
\begin{equation}
\label{e(P)}
e(P+xy)+e(P+yx)=e(P).
\end{equation}

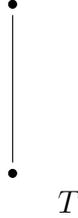
\begin{figure}[t!]
 \centering
 \begin{tikzpicture}[scale=0.75]
	\filldraw (0,0) circle (2pt) node(v1){};
	\filldraw (0,3) circle (2pt) node(v2){};
	\filldraw (2,1.5) circle (2pt) node(v3){};
	\draw (1,-0.5) node {$T$};
	\draw[-] (v1)--(v2);
	\end{tikzpicture}	
	\caption{The poset $T$ with three elements and one relation}
	\label{fig:3elem1relation}
\end{figure}

There are ideas of Zaguia which we will find useful in a number of our proofs. The following definitions were introduced in~\cite{Zag16}, although here we refer to them with different names which we find more descriptive.  Given $x$ in a poset $P$ we let $L_x$ and $U_x$ denote the {\em strict lower} and {\em strict upper order ideals} generated by $x$, that is,
$$
L_x=\{y\in P  \mid y< x\}
$$
and
$$
U_x=\{y\in P  \mid y> x\}.
$$

\begin{defn} \label{def:almosttwinpair}
Let $P$ be a poset and $x$ and $y$ be distinct elements of $P$.
\begin{enumerate}[(a)]
%\item We call the pair $(x,y)$ \emph{critical} if $U_y\subseteq U_x$ and $L_x\subseteq L_y$.  
\item We call the pair $(x,y)$ \emph{twin elements} if $L_x = L_y$ and $U_x=U_y$. 
\item We call the pair $(x,y)$ \emph{almost twin elements} if the following two conditions hold in $P$ or in the dual of $P$:
\begin{enumerate}[(i)]
\item $L_x=L_y$, and
\item $U_x\ \backslash\ U_y$ and $U_y\ \backslash\ U_x$ are chains (possibly empty).
\end{enumerate}
\end{enumerate}
\end{defn}

The result of Zaguia's which we will need is as follows.

\begin{thm}[\cite{Zag16}]\label{thm:asymmetricpair}
A finite poset that has an almost twin pair of elements is $1/3$-balanced.\hfill\qed
\end{thm}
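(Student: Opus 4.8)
The plan is to exhibit a balanced pair explicitly, using only \emph{transposition} maps on linear extensions. Since a linear extension of the dual $P^{*}$ is the reverse of a linear extension of $P$, we have $\delta(P^{*})=\delta(P)$, so we may assume conditions (i)--(ii) of Definition~\ref{def:almosttwinpair} hold in $P$ itself. These conditions are symmetric in $x$ and $y$, so we may also assume $\P(x\prec y)\le\tfrac12$; if $\P(x\prec y)\ge\tfrac13$ then $(x,y)$ is the required pair, so assume $\P(x\prec y)<\tfrac13$. Write $A=U_{x}\setminus U_{y}$, $B=U_{y}\setminus U_{x}$ (both chains) and $C=U_{x}\cap U_{y}$. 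From $L_{x}=L_{y}$ one checks that $x$ and $y$ are incomparable, that every $b\in B$ is incomparable to $x$, and that anything lying above some $b\in B$ lies in $B$ or in $C\subseteq U_{x}$ (because it lies above $y$).

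I would first rule out $B=\varnothing$: if $U_{y}\subseteq U_{x}$, then transposing the positions of $x$ and $y$ in a linear extension with $y\prec x$ gives a valid linear extension with $x\prec y$ --- the constraints below $x$ survive because $L_{x}=L_{y}$, and those above $y$ survive because $U_{y}\subseteq U_{x}$ already forces them past $x$. This is an injection, so $\P(x\prec y)\ge\tfrac12$, contradicting $\P(x\prec y)<\tfrac13$. Hence $B=\{b_{1}<b_{2}<\dots<b_{l}\}$ is nonempty; put $b_{0}:=y$, so that $b_{0}<b_{1}<\dots<b_{l}$ is a chain, each $b_{j}$ is incomparable to $x$, and the numbers $q_{j}:=\P(x\prec b_{j})$ satisfy $q_{0}=\P(x\prec y)<\tfrac13$ and $q_{0}\le q_{1}\le\dots\le q_{l}$.

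The core of the proof is two estimates, each obtained by transposing $x$ with a single $b_{j}$. For $0\le j\le l-1$: in a linear extension in which $b_{j}\prec x\prec b_{j+1}$, transposing $x$ and $b_{j}$ yields a valid linear extension with $x\prec b_{j}$; the only relations in danger are those forcing an element to lie \emph{above} $b_{j}$, but such an element is either a later $b_{i}$ (with $b_{i}\ge b_{j+1}$, hence already past $x$ since $x\prec b_{j+1}$) or an element of $C\subseteq U_{x}$ (again already past $x$). The resulting injection gives $\P(b_{j}\prec x\prec b_{j+1})\le\P(x\prec b_{j})$, hence $q_{j+1}\le 2q_{j}$. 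Taking instead the transposition of $x$ and $b_{l}$ in a linear extension with $b_{l}\prec x$ (there is now no ``later $b$''), the same reasoning shows $\P(b_{l}\prec x)\le\P(x\prec b_{l})$, i.e. $q_{l}\ge\tfrac12$.

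To finish, let $j^{*}$ be minimal with $q_{j^{*}}\ge\tfrac13$; it exists since $q_{l}\ge\tfrac12$, and $j^{*}\ge 1$ since $q_{0}<\tfrac13$. Then $q_{j^{*}-1}<\tfrac13$, so $q_{j^{*}}\le 2q_{j^{*}-1}<\tfrac23$, and $(x,b_{j^{*}})$ is a $1/3$-balanced pair. The step I expect to require the most care is verifying that each transposition map really produces a \emph{linear extension of $P$}, rather than merely a permutation respecting the relations among the untouched elements: one must run through all of the order relations of $P$, and the delicate case is an element of $C$ that happens to sit below some $b_{j}$. It is worth noting that the chain hypothesis on $A$ is never used in this approach --- only that $B$ is a chain (equivalently, that one of the two upper differences is a chain, after the possible passage to the dual).
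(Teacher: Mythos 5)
Your argument is correct, and note that the paper itself gives no proof of this statement: Theorem~\ref{thm:asymmetricpair} is quoted from~\cite{Zag16} with a bare citation, so there is no in-paper argument to compare against. Your proof is a complete, self-contained substitute. The transposition scheme --- swap $x$ with $b_j$ (where $b_0=y$ and $B=U_y\setminus U_x=\{b_1<\dots<b_l\}$), observe that $L_x=L_y\subseteq L_{b_j}$ protects the lower constraints while $U_{b_j}\subseteq\{b_{j+1},\dots,b_l\}\cup(U_x\cap U_y)$ protects the upper ones, and deduce $q_{j+1}\le 2q_j$ together with $q_l\ge\tfrac{1}{2}$ --- checks out at every step, and the doubling-along-a-chain mechanism is essentially the engine of Zaguia's own argument (which, as the paper notes, proves a more general statement). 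Two small remarks. First, the ``delicate case'' you flag, an element $c$ of $C=U_x\cap U_y$ sitting below some $b_j$, cannot occur: $c<b_j$ together with $x<c$ would give $x<b_j$, contradicting $b_j\notin U_x$, so that verification is vacuous. Second, your closing claim that only the chain hypothesis on $B$ is used is slightly misleading as stated: you first relabel so that $\P(x\prec y)\le\tfrac{1}{2}$, and only after that relabeling is it determined which of the two difference sets plays the role of $B$; since one cannot know in advance which element wins at least half the time, the symmetric hypothesis of Definition~\ref{def:almosttwinpair} (both differences are chains) is what guarantees the argument can always be run. Neither remark affects the validity of the proof.
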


In fact,  Zaguia proved a more general result by relaxing the definition of an almost twin pair. But we will not need that level of generality here.
 It is important to keep in mind that many of the known results for the $1/3$--$2/3$ Conjecture are existence proofs and do not compute $\P(x\prec y)$ exactly for any pair $(x,y)$ in the given poset. This is particularly true in the case of  almost twin pairs of elements. 
Further, an almost twin pair need not be $1/3$-balanced, even though its existence implies that a $1/3$-balanced pair exists.

The rest of this paper is structured as follows.  In the next section, we will show that a poset with an automorphism containing a $2$-cycle is $1/2$-balanced.  In particular, every poset with twin elements is $1/2$-balanced.  The automorphism result is applied in Section~\ref{sec:Lattices} to various types of lattices including the Boolean lattice, set partition lattice, subspace lattice, and certain distributive lattices.  We also consider the lattice obtained by taking the product of two chains.  This last example is just the poset of a rectangular Young diagram and in Section~\ref{sec:OtherDiagrams} we show that the poset of any Young diagram, including those which are skew or shifted, is $1/3$-balanced.  Section~\ref{sec:Dim2} is devoted to showing that certain posets of dimension $2$ which satisfy a pattern avoidance condition have balance constant $1/2$.  We end with a section discussing posets which have balance constants near, but not equal to, $1/3$.  A number of questions concerning future research are scattered throughout.

%%%%%%%%%%%%%%%%%%%%%%%%%%%%%%%%%%

\section{Automorphisms of Posets}\label{sec:Autos}

We first provide a proof of a simple observation about the linear extensions of a poset with an automorphism.

\begin{prop}~\label{prop:LEbijection}
An automorphism $\phi$ of a poset $P$ induces a bijection on $E(P)$. Further, $\P(x\prec y) = \P(\phi(x)\prec \phi(y))$ for all $x,y\in P$.
\end{prop}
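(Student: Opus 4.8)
The plan is to verify directly that $\phi$, applied to the sequence of a linear extension, produces another linear extension, and that the map so defined is invertible. Given a linear extension $x_1 \prec x_2 \prec \cdots \prec x_n$ of $P$, consider the word $\phi(x_1) \phi(x_2) \cdots \phi(x_n)$. First I would note that since $\phi$ is a bijection on the underlying set $P$, this word is a permutation of the elements of $P$, so it is a candidate linear extension. To check that it actually is one, I must show that whenever $\phi(x_i) <_P \phi(x_j)$ we have $i < j$. Because $\phi$ is an automorphism, $\phi^{-1}$ is also order-preserving (indeed order-reflecting: $a <_P b$ if and only if $\phi(a) <_P \phi(b)$), so $\phi(x_i) <_P \phi(x_j)$ forces $x_i <_P x_j$, and since the original word was a linear extension this gives $i < j$. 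Hence $\phi(x_1) \cdots \phi(x_n) \in E(P)$, and we have a well-defined map $\Phi \colon E(P) \to E(P)$.

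Next I would argue $\Phi$ is a bijection. The cleanest route is to observe that $\phi^{-1}$ is also an automorphism of $P$, so it induces a map $\Phi^{-1} \colon E(P) \to E(P)$ by the same construction, and $\Phi \circ \Phi^{-1}$ and $\Phi^{-1} \circ \Phi$ are both the identity on $E(P)$ since $\phi \circ \phi^{-1} = \phi^{-1} \circ \phi = \mathrm{id}_P$ acts entrywise as the identity on words. Since $E(P)$ is finite, injectivity alone would also suffice, but exhibiting the inverse is more transparent.

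For the second assertion, fix distinct $x, y \in P$. I would set up a bijection between the set of linear extensions in which $x$ precedes $y$ and the set of linear extensions in which $\phi(x)$ precedes $\phi(y)$, using $\Phi$ itself. If $L = x_1 \cdots x_n$ is a linear extension with $x$ appearing before $y$, say $x = x_i$ and $y = x_j$ with $i < j$, then in $\Phi(L) = \phi(x_1) \cdots \phi(x_n)$ the element $\phi(x) = \phi(x_i)$ appears in position $i$ and $\phi(y) = \phi(x_j)$ in position $j$, so $\phi(x)$ precedes $\phi(y)$ in $\Phi(L)$. Conversely $\Phi^{-1}$ sends a linear extension with $\phi(x)$ before $\phi(y)$ back to one with $x$ before $y$. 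Thus $\Phi$ restricts to a bijection between these two subsets of $E(P)$, so they have the same cardinality; dividing by $e(P)$, which is unchanged, gives $\P(x \prec y) = \P(\phi(x) \prec \phi(y))$.

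There is no real obstacle here; the only thing that needs care is being precise about the distinction between the order relation on $P$ and the positional order within a word, and invoking the fact that an automorphism reflects the order (not merely preserves it) so that $\Phi$ genuinely lands in $E(P)$ rather than in some larger set of words. Everything else is bookkeeping, and the finiteness of $P$ is not even needed for this proposition, though it is of course in force throughout the paper.
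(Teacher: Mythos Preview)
Your proof is correct and follows essentially the same approach as the paper's: both apply $\phi$ entrywise to a linear extension, use the order-reflecting property of an automorphism to verify the result is again a linear extension, and then observe that this map carries extensions with $x$ before $y$ bijectively to those with $\phi(x)$ before $\phi(y)$. Your version is slightly more explicit in exhibiting the inverse via $\phi^{-1}$, whereas the paper simply notes that bijectivity of $\phi$ implies bijectivity of the induced map, but this is a cosmetic difference.
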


\begin{proof}
Let $\phi: P\to P$ be any automorphism. This means that for $x,y \in P$, $x\le_P y$ if and only if $\phi(x)\le_P \phi(y)$. Now, let $\pi= a_1a_2\cdots a_n$ be a linear extension of $P$, and by the definition of linear extension, we know that if $a_i \le_P a_j,$ then $i\le j$. As $\phi$ is an automorphism, then we also have that if $\phi(a_i)\le_P \phi(a_j)$, then $i\le j$. This gives us, by definition, that $\phi(\pi)= \phi(a_1)\phi(a_2) \cdots \phi(a_n)$ is a linear extension of $P$. And the fact that $\phi$ is bijective implies that the induced map on $E(P)$ is as well.

We can also observe that the linear extensions with $x$ before $y$ map bijectively via $\phi$ to the linear extensions with $\phi(x)$ before $\phi(y)$. Hence, $\P(x\prec y) = \P(\phi(x)\prec \phi(y))$, as desired.
\end{proof}

In ~\cite{GHP}, Ganter, Hafner, and Poguntke prove that posets with a nontrivial automorphism satisfy Conjecture~\ref{conj}.

\begin{thm}[\cite{GHP}]~\label{thm:automorph}
If a poset $P$ has a non-trivial automorphism, then $P$ is $1/3$-balanced.\hfill\qed
\end{thm}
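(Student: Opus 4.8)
The plan is to prove the stronger statement that a poset with a non-trivial automorphism is in fact $1/2$-balanced, which immediately gives Theorem~\ref{thm:automorph}. The starting point is Proposition~\ref{prop:LEbijection}: if $\phi$ is a non-trivial automorphism of $P$, then there exist distinct elements $x,y\in P$ with $\phi(x)=y$, and we have $\P(x\prec y)=\P(\phi(x)\prec\phi(y))=\P(y\prec\phi(y))$. The idea is to iterate this. Since $\phi$ is a permutation of the finite set $P$, the element $x$ lies in some cycle $(x = x_0, x_1, \ldots, x_{k-1})$ of $\phi$ with $k\ge 2$, where $x_{i+1}=\phi(x_i)$ (indices mod $k$). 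Applying Proposition~\ref{prop:LEbijection} repeatedly yields
\[
\P(x_0\prec x_1)=\P(x_1\prec x_2)=\cdots=\P(x_{k-2}\prec x_{k-1})=\P(x_{k-1}\prec x_0).
\]
Write $p=\P(x_0\prec x_1)$ for this common value.

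The key step is to extract a pair whose probability is exactly (or close to) $1/2$ from these equalities. Consider the sum $\sum_{i=0}^{k-1}\P(x_i\prec x_{i+1})$ (indices mod $k$). On one hand this equals $kp$. On the other hand, for any fixed linear extension $\pi$, the elements $x_0,\ldots,x_{k-1}$ appear in some order, and the number of indices $i$ for which $x_i$ precedes $x_{i+1}$ in the cyclic sequence is at most $k-1$ (a cyclic word in the $x_i$ cannot have $x_i$ before $x_{i+1}$ for every $i$, since that would force $x_0\prec x_1\prec\cdots\prec x_{k-1}\prec x_0$). Averaging over all linear extensions gives $kp\le k-1$, so $p\le 1-1/k\le 1/2$ when... wait, that only gives $p\le 1-1/k$, not $p\le 1/2$. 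To get the lower bound, apply the same argument to the reversed cycle, or observe that by~\eqref{e(P)}, $\P(x_{i+1}\prec x_i)=1-p$ for all $i$, and the analogous counting argument on the reversed cyclic sequence gives $k(1-p)\le k-1$, i.e. $p\ge 1/k$. Hence $1/k\le p\le 1-1/k$, so in particular $p=\P(x_0\prec x_1)$ satisfies $1/3\le p\le 2/3$ whenever $k\ge 3$; and when $k=2$ we get $1/2\le p\le 1/2$, forcing $\P(x_0\prec x_1)=1/2$ exactly. In all cases $k\ge 2$ gives $1/3\le p\le 2/3$, so $(x_0,x_1)$ is a $1/3$-balanced pair.

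I expect the main obstacle to be the counting lemma that a linear extension cannot realize the full cyclic chain $x_0\prec x_1\prec\cdots\prec x_{k-1}\prec x_0$; this is really just the observation that a linear order is acyclic, so it should be straightforward, but care is needed to phrase the averaging argument cleanly (summing an indicator over linear extensions and swapping the order of summation). A secondary point worth checking is that $P$ not being a chain is automatic here, or rather irrelevant: a non-trivial automorphism already forces two incomparable elements in the same cycle, since comparabilities are preserved and a finite chain has only the trivial automorphism. One should also note, for the remark following the theorem, that the $k=2$ case recovers the fact that twin elements (and more generally any $2$-cycle of an automorphism) yield a $1/2$-balanced pair, which is exactly the refinement promised in the introduction.
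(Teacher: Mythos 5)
There is a genuine gap, and it sits exactly where the theorem says more than Proposition~\ref{prop:auto2cycle}. Your cyclic averaging argument is sound as far as it goes: for a $k$-cycle $(x_0,\dots,x_{k-1})$ of $\phi$, every linear extension realizes at least $1$ and at most $k-1$ of the $k$ cyclic relations $x_i\prec x_{i+1}$, so the common value $p=\P(x_i\prec x_{i+1})$ satisfies $1/k\le p\le 1-1/k$. But this interval \emph{widens} as $k$ grows, so it is contained in $[1/3,2/3]$ only for $k\in\{2,3\}$; your assertion that it yields $1/3\le p\le 2/3$ ``whenever $k\ge 3$'' runs the inequality the wrong way. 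For $k=4$ you obtain only $1/4\le p\le 3/4$, and nothing in your argument prevents the adjacent-pair probability of a long cycle from lying outside $[1/3,2/3]$. So the proof is complete only when some non-trivial cycle has length $2$ or $3$, and proves nothing otherwise. (The opening promise to establish the stronger $1/2$-balancedness is also not delivered; note the paper claims $1/2$ only in the presence of a $2$-cycle, which is precisely Proposition~\ref{prop:auto2cycle}.)

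For calibration: the paper itself gives no proof of Theorem~\ref{thm:automorph}; it is imported from Ganter, Hafner, and Poguntke. A correct argument must use non-adjacent pairs in the cycle. Since $\P(x_i\prec x_j)$ depends only on $j-i$ modulo $k$, write $q_d=\P(x_0\prec x_d)$; applying $\phi^{k-d}$ to the event $x_d\prec x_0$ gives $q_d+q_{k-d}=1$. If $k$ is even, then $q_{k/2}=1/2$ and $(x_0,x_{k/2})$ is a $1/2$-balanced pair. If $k$ is odd and no $q_d$ lies in $[1/3,2/3]$, then $S=\{d:q_d>2/3\}$ is non-empty, omits $0$, and is closed under addition modulo $k$: if $q_d,q_{d'}>2/3$ then $\P(x_0\prec x_{d+d'})\ge 1-(1-q_d)-(1-q_{d'})>1/3$, hence $q_{d+d'}>2/3$ by assumption, while $d+d'\equiv 0$ would contradict $q_d+q_{k-d}=1$. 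No non-empty subset of $\mathbb{Z}/k\mathbb{Z}$ closed under addition can avoid $0$, a contradiction. This inter-orbit step is what your adjacent-pair bound cannot supply; without it, or something like it, the case $k\ge 4$ remains open in your write-up.
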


We will give a more refined version of this result by giving a condition on the automorphism which will ensure a balance constant of $1/2$.

\begin{prop}~\label{prop:auto2cycle}
If a poset $P$ has an automorphism with a cycle of length $2$, then $P$ is $1/2$-balanced. Further, if $x$ and $y$ are the elements in the cycle of length $2$, then $(x,y)$ is a $1/2$-balanced pair. 
\end{prop}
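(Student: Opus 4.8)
The plan is to use the automorphism $\phi$ that contains the transposition $(x\ y)$ as a $2$-cycle and show directly that it induces a bijection between the linear extensions with $x$ before $y$ and those with $y$ before $x$. By Proposition~\ref{prop:LEbijection}, $\phi$ induces a bijection $\pi\mapsto\phi(\pi)$ on $E(P)$. Since $\phi(x)=y$ and $\phi(y)=x$, in the permutation $\phi(\pi)$ the letter $x$ occupies the slot previously held by $y$ and vice versa; hence $\phi$ sends a linear extension in which $x$ precedes $y$ to one in which $y$ precedes $x$. So $\phi$ restricts to a map $E_{x\prec y}(P)\to E_{y\prec x}(P)$, where I write $E_{x\prec y}(P)$ for the set of linear extensions with $x$ before $y$.

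The next step is to observe that this restricted map is itself a bijection. The cleanest way is to note that $\phi$ is its own inverse on the relevant pair: the cycle $(x\ y)$ has order $2$, but $\phi$ need not be an involution globally, so I cannot simply say $\phi^{-1}=\phi$. Instead I argue that $\phi\colon E_{x\prec y}(P)\to E_{y\prec x}(P)$ is injective (being a restriction of the injective map on all of $E(P)$) and that $\phi\colon E_{y\prec x}(P)\to E_{x\prec y}(P)$ is also well-defined by the same swapping argument; composing these two restrictions gives an injection $E_{x\prec y}(P)\to E_{x\prec y}(P)$, which on a finite set forces each restriction to be a bijection. Therefore $|E_{x\prec y}(P)|=|E_{y\prec x}(P)|$, and since these two sets partition $E(P)$ by~\eqref{e(P)}, we get $\P(x\prec y)=e(P+xy)/e(P)=1/2$. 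Thus $(x,y)$ is a $1/2$-balanced pair and $P$ is $1/2$-balanced.

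I do not expect a serious obstacle here; the one point requiring a little care is exactly the remark above — that $\phi$ having a $2$-cycle on $\{x,y\}$ does not make $\phi$ a global involution — so the bijectivity of the restricted map must be deduced from finiteness (or from the pair of mutually inverse-looking restrictions) rather than from $\phi^2=\mathrm{id}$. Everything else is bookkeeping with one-line notation and Proposition~\ref{prop:LEbijection}. It is also worth noting explicitly why $x$ and $y$ are incomparable in $P$ (so that both $P+xy$ and $P+yx$ make sense as distinct posets): if $x<_P y$ then applying $\phi$ gives $y<_P x$, a contradiction, and similarly for $y<_P x$; hence $x$ and $y$ are incomparable, which is consistent with $\P(x\prec y)=1/2$ and in particular shows $P$ is not a chain.
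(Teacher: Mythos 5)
Your proof is correct and follows essentially the same route as the paper: apply the bijection on $E(P)$ induced by $\phi$ to see that the linear extensions with $x$ before $y$ correspond to those with $y$ before $x$, then conclude from $e(P+xy)+e(P+yx)=e(P)$ that each class has size $e(P)/2$. The only difference is that the paper simply invokes the second assertion of Proposition~\ref{prop:LEbijection} (that $\P(x\prec y)=\P(\phi(x)\prec\phi(y))$) to get $e(P+xy)=e(P+yx)$ in one line, whereas you re-derive the bijectivity of the restricted map via a finiteness argument; both are fine.
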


\begin{proof}
 Let $\phi:P\to P$ be an automorphism and $x,y\in P$ be distinct elements such that $\phi(x) = y$ and $\phi(y)=x$. Thus, using  Proposition~\ref{prop:LEbijection},  we see that
\[ e(P+xy) = e(P + \phi(x)\phi(y)) = e(P + yx). \]
Combining this with  equation~(\ref{e(P)}), we have
$$
e(P) = e(P+xy)+e(P+yx)\\
    = 2e(P+xy),
$$
and so $e(P+xy) = e(P)/2$. Hence, $(x,y)$ is a $1/2$-balanced pair, as desired.
\end{proof}

An example of a poset with an automorphism having a cycle of length $2$ is given in Figure~\ref{fig:auto_antiauto_examples}. Poset $P$ has a balance constant of $1/2$. A counterexample to the converse of Proposition~\ref{prop:auto2cycle} is also provided in Figure~\ref{fig:auto_antiauto_examples}. Poset $Q$ has a balance constant of $1/2$, as it has 12 linear extensions and $e(P+34)=6.$ However, we can see by inspection it has no nontrivial automorphisms. 

\begin{figure}[t!]
\centering
	\begin{minipage}{.35\textwidth}
		\centering
		\begin{tikzpicture}[scale=0.75]
		\filldraw (0,0) circle (2pt) node(v2){};
		\filldraw (2,0) circle (2pt) node(v1){};
		\filldraw (1,2) circle (2pt) node(v3){};
		\filldraw (0,4) circle (2pt) node(v5){};
		\filldraw (2,4) circle (2pt) node(v4){};
		\draw (-0.25,0) node {$2$};
		\draw (2.25,0) node {$1$};
		\draw (1.25,2) node {$3$};
		\draw (-0.25,4) node {$5$};
		\draw (2.25,4) node {$4$};
		\draw (1,-0.75) node {$P$};
		\draw[-] (v1)--(v3)--(v5);
		\draw[-] (v2)--(v3)--(v4);
		\end{tikzpicture}  
		%\caption{}	
	\end{minipage}
	\hspace{5mm}
	\begin{minipage}{.35\textwidth}
		\centering
		\begin{tikzpicture}[scale=0.75]
		\filldraw (0,0) circle (2pt) node(v2){};
		\filldraw (0,2) circle (2pt) node(v4){};
		\filldraw (0,4) circle (2pt) node(v6){};
		\filldraw (2.5,0) circle (2pt) node(v1){};
		\filldraw (2.5,2) circle (2pt) node(v3){};
		\filldraw (2.5,4) circle (2pt) node(v5){};
		\draw (-0.25,0) node {$2$};
		\draw (-0.25,2) node {$4$};
		\draw (-0.25,4) node {$6$};
		\draw (2.75,0) node {$1$};
		\draw (2.75,2) node {$3$};
		\draw (2.75,4) node {$5$};
		\draw (1.25,-0.75) node {$Q$};
		\draw[-] (v1)--(v3)--(v6)--(v4)--(v2)--(v3)--(v5);
		\end{tikzpicture}
		%
		%\label{fig:6elemExample_noAuto}		
	\end{minipage}
	\caption{The poset $P$ has an automorphism with cycle length $2$ and balance \\ constant $1/2$, while $Q$ has no nontrivial automorphisms and balance constant $1/2$}
	\label{fig:auto_antiauto_examples}	
\end{figure}
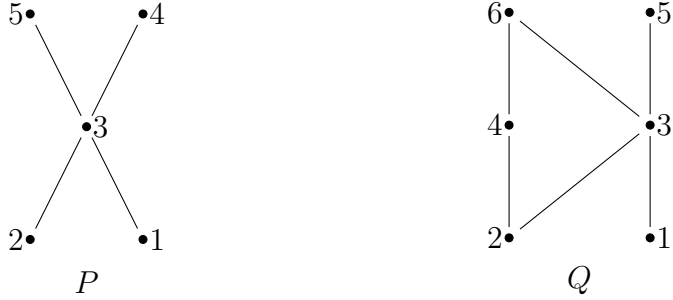

The following is a corollary to Proposition~\ref{prop:auto2cycle}.

\begin{cor}\label{cor:twinpair}
A poset $P$ with a twin pair of elements is $1/2$-balanced.
\end{cor}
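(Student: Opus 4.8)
The plan is to deduce this immediately from Proposition~\ref{prop:auto2cycle} by constructing, from a twin pair $(x,y)$, an automorphism of $P$ having $(x,y)$ as a $2$-cycle. Concretely, I would let $\phi\colon P\to P$ be the map that transposes $x$ and $y$ and fixes every other element of $P$, and then verify that $\phi$ is a poset automorphism. Since $\phi$ is clearly an involution (hence a bijection on the underlying set) with $(x,y)$ as a $2$-cycle, the only thing to check is that $\phi$ preserves the order relation in both directions; because $\phi=\phi^{-1}$, it suffices to show that $a\le_P b$ implies $\phi(a)\le_P\phi(b)$.

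The verification splits into cases according to how many of $a,b$ lie in $\{x,y\}$. If neither $a$ nor $b$ is in $\{x,y\}$, then $\phi(a)=a$ and $\phi(b)=b$ and there is nothing to prove. If exactly one of them is in $\{x,y\}$, say $a=x$ and $b\notin\{x,y\}$, then $a\le_P b$ with $a\ne b$ forces $b\in U_x$; by the twin condition $U_x=U_y$, so $b\in U_y$, i.e. $y<_P b$, which says $\phi(a)=y\le_P b=\phi(b)$. The symmetric subcases ($a=y$; or $b\in\{x,y\}$ and $a\notin\{x,y\}$, using $L_x=L_y$) are handled the same way. Finally, if both $a$ and $b$ lie in $\{x,y\}$, then since $a\le_P b$ and $a\ne b$ we would have, say, $x<_P y$, forcing $x\in L_y=L_x$, which is impossible as no element is strictly below itself; so this case is vacuous, which incidentally also records that twin elements are always incomparable. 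Having checked all cases, $\phi$ is an automorphism of $P$ with a $2$-cycle, and Proposition~\ref{prop:auto2cycle} then gives that $P$ is $1/2$-balanced, with $(x,y)$ itself a $1/2$-balanced pair.

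I do not anticipate a serious obstacle here; the corollary is essentially a bookkeeping exercise. The one point that requires a little care is making sure the twin hypothesis is used in both directions ($L_x=L_y$ handles lower covers, $U_x=U_y$ handles upper ones) and that the ``both in $\{x,y\}$'' case is genuinely vacuous rather than merely asserted. It is also worth noting explicitly in the writeup that $L_x$ and $U_x$ are the \emph{strict} ideals as defined before Definition~\ref{def:almosttwinpair}, so that an element is never in its own $L$ or $U$; this is what makes the transposition well-behaved at $x$ and $y$ themselves.
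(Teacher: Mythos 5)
Your proposal is correct and follows exactly the paper's route: construct the transposition of $x$ and $y$ fixing everything else, check it is an automorphism using $L_x=L_y$ and $U_x=U_y$, and invoke Proposition~\ref{prop:auto2cycle}. The paper simply asserts that this map is an automorphism, whereas you spell out the case analysis (including the observation that twins are incomparable), which is a harmless and welcome elaboration.
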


\begin{proof}
Let $P$ be a poset with $x$ and $y$ a twin pair of elements. We can see that $P$ has a non-trivial automorphism  which fixes all elements except for $x$ and $y$ and interchanges $x$ and $y$. So, this poset has an automorphism with a cycle of length $2$ and we are done by Proposition~\ref{prop:auto2cycle}.
\end{proof}

While the above results depend on an automorphism of a poset, it is natural to ask if we can obtain results from other types of maps. Next, we consider anti-automorphisms $\sigma$.   So if $\sigma^{2}$ is not the identity, then $P$ has a non-trivial automorphism and we have the following immediate corollary to Theorem~\ref{thm:automorph}.
\begin{cor}
If $\sigma$ is an anti-automorphism on $P$ and $\sigma^{2}$ is non-trivial, then $P$ is $1/3$-balanced.
\hfill \qed
\end{cor}

We can also ask when an anti-automorphism guarantees a poset to be $1/2$-balanced. One such case is as follows.

\begin{prop}\label{prop:antiaut_1/2}
Let $\sigma:P\to P$ be an anti-automorphism. If $\sigma$ has $2$ fixed points, then $P$ is $1/2$-balanced.
\end{prop}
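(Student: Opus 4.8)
The plan is to use the two fixed points of $\sigma$ as the balanced pair. Let $x$ and $y$ be the two fixed points, so $\sigma(x)=x$ and $\sigma(y)=y$. Since $\sigma$ is an anti-automorphism, for any $a,b\in P$ we have $a<_P b$ if and only if $\sigma(b)<_P\sigma(a)$. I would first record the analogue of Proposition~\ref{prop:LEbijection} for anti-automorphisms: if $\pi=a_1a_2\cdots a_n$ is a linear extension of $P$, then the reversed word $\sigma(a_n)\sigma(a_{n-1})\cdots\sigma(a_1)$ is again a linear extension of $P$, because $\sigma(a_i)<_P\sigma(a_j)$ forces $a_j<_P a_i$, hence $j<i$ in $\pi$, so $\sigma(a_i)$ indeed appears after $\sigma(a_j)$ in the reversed-and-relabeled word. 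Since $\sigma$ is a bijection and word reversal is an involution, this gives an involution $\tau$ on $E(P)$.

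The key observation is how $\tau$ acts on the relative order of $x$ and $y$. In $\pi$, suppose $x$ comes before $y$, say $x=a_i$ and $y=a_j$ with $i<j$. In $\tau(\pi)$, the element in position $k$ is $\sigma(a_{n+1-k})$; since $x$ and $y$ are fixed, $x=\sigma(a_i)$ sits in position $n+1-i$ and $y=\sigma(a_j)$ sits in position $n+1-j$, and $n+1-i>n+1-j$ because $i<j$. Hence in $\tau(\pi)$ the element $y$ comes before $x$. So $\tau$ restricts to a bijection between the set of linear extensions with $x\prec y$ and the set of linear extensions with $y\prec x$, giving $e(P+xy)=e(P+yx)$. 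Combined with equation~(\ref{e(P)}) this yields $e(P+xy)=e(P)/2$, so $\P(x\prec y)=1/2$ and $(x,y)$ is a $1/2$-balanced pair, provided $x$ and $y$ are distinct and incomparable (they must be incomparable: if $x<_P y$ then applying $\sigma$ gives $y=\sigma(y)<_P\sigma(x)=x$, a contradiction, and similarly $y\not<_P x$).

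The main thing to be careful about is the bookkeeping in the reversal step — making sure the relabeling by $\sigma$ and the reversal of positions are composed in the right order so that the result is genuinely a linear extension of $P$ (not of its dual), and that the map is an involution. Once the correct map $\tau:E(P)\to E(P)$ is pinned down, the rest is immediate from~(\ref{e(P)}). There is no real obstacle beyond this; the hypothesis of \emph{two} fixed points is exactly what is needed to produce a candidate incomparable pair whose order is reversed by $\tau$.
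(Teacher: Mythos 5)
Your proof is correct and is essentially the paper's own argument: the paper defines the same map $\tau$ (apply $\sigma$ to each entry of a linear extension, then read the word backwards), observes that $\P(x\prec y)=\P(\sigma(y)\prec\sigma(x))$, and for the two fixed points finishes via equation~(\ref{e(P)}) exactly as in Proposition~\ref{prop:auto2cycle}. One word-level slip: in your verification that $\tau(\pi)$ is a linear extension, the relation $j<i$ places $\sigma(a_i)$ \emph{before} $\sigma(a_j)$ in the reversed word (which is precisely what is needed for it to be a linear extension of $P$ rather than of its dual), not after; your position bookkeeping for $x$ and $y$ later in the proof is done correctly.
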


\begin{proof}
Let $P$ be a poset and $\sigma:P\to P$ be an anti-automorphism. Consider the bijection $\tau$ on linear extensions which takes a linear extension of $P$, applies $\sigma$ to each of its elements, and then reads the resulting sequence backwards. Observe that for any $x,y\in P$, the linear extensions with $x$ before $y$  map bijectively via $\sigma$ to the linear extensions with $\sigma(y)$ before $\sigma(x)$. Hence, $\P(x\prec y) = \P(\sigma(y)\prec \sigma(x))$. The proof is now completed in exactly the same way as the demonstration of Proposition~\ref{prop:auto2cycle}.
\end{proof}

We cannot weaken the assumption in Proposition~\ref{prop:antiaut_1/2}, since a unique fixed point in an anti-automorphism is not enough to guarantee that the poset is $1/2$-balanced. For an example, consider the poset $P$ and anti-automorphism $\sigma$ in Figure~\ref{fig:Antiaut_1fixed} where for $x\in P$ we place $\sigma(x)$ on the right in the same position as $x$ on the left. Any anti-automorphism of $P$, including $\sigma$ shown here, will have exactly $1$ fixed point, and computer calculations give us that $\delta(P)= \frac{711}{1431}\neq \frac{1}{2}$.

\begin{figure}[t!]
\centering
	\begin{tikzpicture}[scale=0.75]
		\filldraw (0,0) circle (2pt) node(v3){};
		\filldraw (0,2) circle (2pt) node(v6){};
		\filldraw (0,4) circle (2pt) node(v9){};
		\filldraw (2,0) circle (2pt) node(v2){};
		\filldraw (2,2) circle (2pt) node(v5){};
		\filldraw (2,4) circle (2pt) node(v8){};
		\filldraw (4,0) circle (2pt) node(v1){};
		\filldraw (4,2) circle (2pt) node(v4){};
		\filldraw (4,4) circle (2pt) node(v7){};
		\draw (-0.25,0) node {$3$};
		\draw (-0.25,2) node {$6$};
		\draw (-0.25,4) node {$9$};
		\draw (2.25,0) node {$2$};
		\draw (2.25,2) node {$5$};
		\draw (2.25,4) node {$8$};
		\draw (4.25,0) node {$1$};
		\draw (4.25,2) node {$4$};
		\draw (4.25,4) node {$7$};
		\draw (v9)--(v6)--(v3)--(v8)--(v5)--(v2)--(v7)--(v4)--(v1);
		\draw (0,4) .. controls (2.5,2.5) .. (4,0);
		\draw[->] (5,2)--(6,2);
		\draw (5.5,2.25) node {$\sigma$};
		\filldraw (7,0) circle (2pt) node(v3){};
		\filldraw (7,2) circle (2pt) node(v6){};
		\filldraw (7,4) circle (2pt) node(v9){};
		\filldraw (9,0) circle (2pt) node(v2){};
		\filldraw (9,2) circle (2pt) node(v5){};
		\filldraw (9,4) circle (2pt) node(v8){};
		\filldraw (11,0) circle (2pt) node(v1){};
		\filldraw (11,2) circle (2pt) node(v4){};
		\filldraw (11,4) circle (2pt) node(v7){};
		\draw (6.75,0) node {$8$};
		\draw (6.75,2) node {$5$};
		\draw (6.75,4) node {$2$};
		\draw (9.25,0) node {$9$};
		\draw (9.25,2) node {$6$};
		\draw (9.25,4) node {$3$};
		\draw (11.25,0) node {$7$};
		\draw (11.25,2) node {$4$};
		\draw (11.25,4) node {$1$};
		\draw (v9)--(v6)--(v3)--(v8)--(v5)--(v2)--(v7)--(v4)--(v1);
		\draw (7,4) .. controls (9.5,2.5) .. (11,0);
		\end{tikzpicture}
		\caption{$P$ and anti-automorphism $\sigma$ with $1$ fixed point}
		\label{fig:Antiaut_1fixed}		
\end{figure}
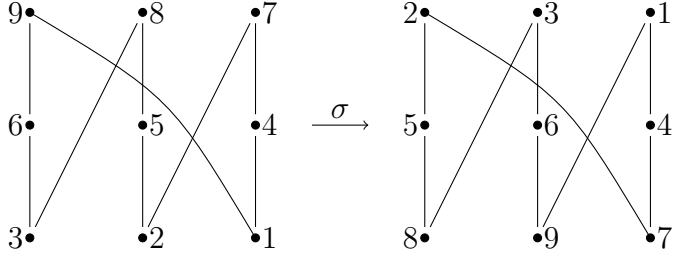

We can also see that the converse of Proposition~\ref{prop:antiaut_1/2} is not true, as evidenced by the counterexample in Figure~\ref{fig:auto_antiauto_examples}. Any anti-automorphism of the poset $P$ will have exactly 1 fixed point, and yet it is $1/2$-balanced.

%%%%%%%%%%%%%%%%%%%%%%%%%%%%%%%%%%

\section{Lattices}\label{sec:Lattices}

%\begin{defn}
%A \emph{distributive lattice} $P$ is a lattice that obeys the following two equivalent properties for all $a,b,c\in P$:
%\[ a\vee (b\wedge c) = (a\vee b) \wedge (a\vee c) \text{ and }a\wedge (b\vee c) = (a\wedge b) \vee (a\wedge c). \]
%\end{defn}

\subsection{Boolean Lattices}

The Boolean lattice, $B_n$, consists of all subsets of $[n]:=\{1,2,\dots,n\}$ ordered by inclusion.
The poset $B_1$ is a chain, and so we only need to consider $n\geq 2$. We present the following as a corollary to Proposition~\ref{prop:auto2cycle}.

\begin{cor}
\label{cor:Bn}
For all $n\geq 2$, the Boolean lattice $B_n$ has an automorphism with a cycle of length $2$.  So the Boolean lattice is $1/2$-balanced.
\end{cor}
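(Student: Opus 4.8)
The plan is to exhibit an explicit automorphism of $B_n$ that swaps two elements and fixes everything else, then invoke Proposition~\ref{prop:auto2cycle}. Since $B_n$ is the lattice of subsets of $[n]$ under inclusion, any permutation $\pi$ of the ground set $[n]$ induces an order-preserving bijection $S \mapsto \pi(S)$ on $B_n$, because $S \subseteq T$ if and only if $\pi(S) \subseteq \pi(T)$. So it suffices to choose $\pi$ cleverly so that the induced map on $B_n$ has a cycle of length exactly $2$.

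The natural candidate is the transposition $\pi = (1\ 2)$ on $[n]$. I would check which subsets are moved: a subset $S$ is fixed by $\pi$ precisely when $\{1,2\} \cap S$ is either empty or all of $\{1,2\}$, and $S$ is moved when exactly one of $1,2$ lies in $S$. In particular, the singletons $\{1\}$ and $\{2\}$ form a $2$-cycle under the induced automorphism. (Many other pairs are swapped too, but one $2$-cycle is all we need.) Thus $B_n$ has an automorphism with a cycle of length $2$, and by Proposition~\ref{prop:auto2cycle}, $(\{1\},\{2\})$ is a $1/2$-balanced pair and $B_n$ is $1/2$-balanced.

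There is essentially no obstacle here; the only things to verify are the routine facts that a ground-set permutation induces a poset automorphism and that the chosen transposition genuinely produces a $2$-cycle rather than only fixed points or longer cycles. Since we need $n \geq 2$ for the transposition $(1\ 2)$ to make sense, and $B_1$ is a chain anyway, the hypothesis $n \geq 2$ in the statement is exactly what is required. If one wanted the cleanest possible phrasing, one could even note that $(\{1\},\{2\})$ is a twin pair — they have the same strict lower ideal (the empty set) and the same strict upper ideal (all sets containing both $1$ and $2$) — and cite Corollary~\ref{cor:twinpair} instead; but deriving it directly from the transposition automorphism keeps the statement self-contained.
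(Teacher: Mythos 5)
Your proof is correct and is essentially the paper's own argument: the map induced by the transposition $(1\ 2)$ on the ground set is exactly the paper's automorphism $\phi(S) = S\,\Delta\,\{1,2\}$ for sets meeting $\{1,2\}$ in a single element (and the identity otherwise), and both arguments conclude via Proposition~\ref{prop:auto2cycle} using the $2$-cycle $(\{1\},\{2\})$. No gaps.
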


\begin{proof}
We will first describe an automorphism of $B_n$ using the symmetric difference operation $\Delta$. For $S\subseteq [n]$, consider $\phi:B_n\to B_n$ defined by
\[ \phi(S)=  \begin{cases}
    S\Delta \{1,2\},& \text{if } S\cap\{1,2\}=\{1\} \text{ or } S\cap\{1,2\} = \{2\} \\
    S, & \text{otherwise.}
\end{cases} \]
One can easily check that $\phi$ is an automorphism. And if $A=\{1\}$ and $B=\{2\}$, then $\phi(A) = B$ and $\phi(B)= A$. Hence, by Proposition~\ref{prop:auto2cycle}, $B_n$ has a $1/2$-balanced pair.
\end{proof}

\subsection{Set Partition Lattices}

The lattice, $\Pi_n$, consists of all partitions of $[n]$ ordered by refinement.  In writing set partitions, we separate subsets with slashes and dispense with set braces and commas.
For $n=1,2$ we have that $\Pi_n$ is a chain, and so will only consider $n\geq 3$.

\begin{cor}\label{lem:partitionlattice}
For $n\geq 3$, the set partition lattice $\Pi_n$ has an automorphism with a cycle of length $2$.  So the set partition lattice is $1/2$-balanced.
\end{cor}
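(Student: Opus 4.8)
The plan is to exhibit a nontrivial automorphism of $\Pi_n$ that has a cycle of length $2$, and then invoke Proposition~\ref{prop:auto2cycle}. The natural source of automorphisms of $\Pi_n$ is the symmetric group $S_n$ acting on the ground set $[n]$: any permutation $\rho\in S_n$ induces a map on set partitions by relabeling the elements within blocks, and this map clearly preserves the refinement order, hence is an automorphism of $\Pi_n$. So the task reduces to choosing $\rho$ so that the induced automorphism $\phi_\rho$ on $\Pi_n$ has a $2$-cycle.

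The key step is to find a partition $\pi$ with $\phi_\rho(\pi)\neq\pi$ but $\phi_\rho^2(\pi)=\pi$. Take $\rho$ to be the transposition $(1\ 2)$. For $n\geq 3$ consider the partition $\pi = 13/2/4/5/\cdots/n$ (the block $\{1,3\}$ together with all remaining elements as singletons); then $\phi_\rho(\pi) = 23/1/4/\cdots/n \neq \pi$, and applying $\rho=(1\ 2)$ again returns to $\pi$, so $\{\pi,\phi_\rho(\pi)\}$ is a cycle of length $2$. (One must check $n\geq 3$ is genuinely needed: for $n\le 2$ there is no third element to pair with $1$, which matches the hypothesis that $\Pi_n$ is a chain only for $n\le2$.) Thus $\phi_\rho$ is an automorphism of $\Pi_n$ with a $2$-cycle, and Proposition~\ref{prop:auto2cycle} immediately gives that $\Pi_n$ is $1/2$-balanced, with $(\pi,\phi_\rho(\pi))$ a $1/2$-balanced pair.

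I do not anticipate a serious obstacle here; the proof is essentially a matter of writing down the right permutation and the right partition. The only points requiring care are: (i) verifying that an $S_n$-relabeling really is an order automorphism of $\Pi_n$ — this is routine since $x,y$ lie in the same block of $\pi$ iff $\rho(x),\rho(y)$ lie in the same block of $\phi_\rho(\pi)$, and refinement is defined block-membership-wise; and (ii) confirming that the chosen $\pi$ is not fixed by $\phi_\rho$, which is immediate from the construction. An alternative, if one prefers to mirror the Boolean case more closely, is to define $\phi$ directly on $\Pi_n$ by swapping the roles of $1$ and $2$ — acting as the identity on any partition in which $1$ and $2$ lie in the same block, and swapping $1\leftrightarrow 2$ otherwise — but the $S_n$-action description is cleaner and makes the automorphism property transparent.
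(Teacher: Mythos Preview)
Your proof is correct and is essentially identical to the paper's own argument: the paper also uses the automorphism of $\Pi_n$ induced by swapping $1$ and $2$, exhibits the same $2$-cycle $\{13/2/4/\cdots/n,\ 1/23/4/\cdots/n\}$, and then appeals to Proposition~\ref{prop:auto2cycle}. Your framing via the $S_n$-action is a slight notational difference only.
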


\begin{proof}
We consider the map that sends a partition $\pi$ to the partition $\pi'$, where $\pi'$ has the same blocks as $\pi$ with the elements $1$ and $2$ interchanged. This is an automorphism of the lattice. Indeed, it is a bijection because it is an involution and swapping $1$ and $2$ preserves ordering by refinement.  
To see that this automorphism has a $2$-cycle, notice that the lattice contains partitions $\pi_1= 13/2/4/\cdots/n$ and $\pi_2 =1/23/4/\cdots/n$ since  $n\ge3$. 
Under the automorphism described above, $\pi_1$ and $\pi_2$ form a $2$-cycle. Hence, by Proposition~\ref{prop:auto2cycle}, the set partition lattice on $n$ elements is $1/2$-balanced when $n\ge3$.
\end{proof}

\subsection{Subspace Lattices}

Consider the $n$-dimensional vector space $\mathbb F_q^n$ over the Galois field  with $q$ elements. Let $L_n(q)$ denote the lattice of subspaces of $\mathbb F_q^n$ ordered by inclusion.  If $n\le 1$ then $L_n(q)$ is a chain.

\begin{cor}\label{cor:subspacelattic}
For $n\geq 2$, the subspace lattice $L_n(q)$ has an automorphism with a cycle of length $2$.  So the subspace lattice is $1/2$-balanced.
\end{cor}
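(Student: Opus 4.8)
The plan is to mimic the strategy used for the Boolean and set partition lattices: exhibit an explicit automorphism of $L_n(q)$ possessing a $2$-cycle, and then invoke Proposition~\ref{prop:auto2cycle}. The natural source of automorphisms here is the action of $GL_n(\mathbb{F}_q)$ on subspaces, so I would look for an invertible linear map whose induced action on subspaces swaps two of them.

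Concretely, fix the standard basis $e_1,\dots,e_n$ of $\mathbb{F}_q^n$ and let $g\colon \mathbb{F}_q^n\to\mathbb{F}_q^n$ be the linear map that interchanges $e_1$ and $e_2$ and fixes $e_3,\dots,e_n$. First I would observe that $g$ is invertible (indeed $g^2$ is the identity), so the map $\phi\colon L_n(q)\to L_n(q)$ defined by $\phi(V)=g(V)$ is well defined, and it is an automorphism of $L_n(q)$: since $g$ is a bijection of $\mathbb{F}_q^n$, it carries subspaces to subspaces and satisfies $U\subseteq V$ if and only if $g(U)\subseteq g(V)$. Next I would note that $\phi^2$ is the identity since $g^2$ is, and that $\phi$ moves the point $\langle e_1\rangle$: we have $\phi(\langle e_1\rangle)=\langle e_2\rangle$ and $\phi(\langle e_2\rangle)=\langle e_1\rangle$ with $\langle e_1\rangle\neq\langle e_2\rangle$ because $e_1$ and $e_2$ are linearly independent. (Here $n\geq 2$ is exactly what guarantees that $\langle e_2\rangle$ lives in $L_n(q)$ and differs from $\langle e_1\rangle$.) Thus $(\langle e_1\rangle,\langle e_2\rangle)$ is a cycle of length $2$ of the automorphism $\phi$, and Proposition~\ref{prop:auto2cycle} finishes the argument.

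I do not expect any serious obstacle: the only point requiring a line of justification is that a bijective linear map induces a lattice automorphism on subspaces, which is immediate. One could alternatively phrase the construction combinatorially for $n=2$ (where $L_2(q)$ is just $\hat 0$ together with the $q+1$ atoms and $\hat 1$, so transposing two atoms is visibly an automorphism), but the linear-algebraic description handles all $n\geq 2$ uniformly and so I would present only that.
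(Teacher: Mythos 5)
Your proposal is correct and follows essentially the same route as the paper: the paper also takes the invertible linear map (written as a matrix $M$) that swaps $e_1$ and $e_2$ and fixes the other basis vectors, lets it act on subspaces to get an automorphism of $L_n(q)$ exchanging $\langle e_1\rangle$ and $\langle e_2\rangle$, and then applies Proposition~\ref{prop:auto2cycle}. No gaps to report.
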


\begin{proof}
Let $B = \{e_1, \ldots, e_n\}$ be the standard basis of $L_n(q)$.  Consider the linear transformation on $\mathbb F_q^n$ defined by the $n\times n$ matrix $M$ that is all zero except for ones in the $(1,2)$, $(2,1)$, and $(i,i)$ positions, $3\leq i\leq n$. Clearly, multiplying by $M$ sends $e_1$ to $e_2$, $e_2$ to $e_1$, and fixes all other basis elements of $\mathbb F_q^n$. If $U\in L_n(q)$ then let
$$
\phi(U)=MU=\{Mu \mid u\in U\}.
$$
It is now easy to check that $\phi$ is a well-defined automorphism of $L_n(q)$ which exchanges  the subspaces spanned by  $e_1$ and by $e_2$.  So we are done by Proposition~\ref{prop:auto2cycle}.
\end{proof}

\subsection{Distributive Lattices}\label{sec:Distributive}

By the Fundamental Theorem on Distributive Lattices, every distributive lattice is isomorphic to the lattice of lower order ideals of some poset $P$ ordered by inclusion.  So it would be interesting to determine results about $J(P)$, the distributive lattice corresponding to a poset $P$, depending on properties of $P$. Unfortunately, it is not true that if $P$ is $1/2$-balanced, then $J(P)$ is $1/2$-balanced as well. An example can be seen in Figure~\ref{fig:J234Chain}. While $P$ is $1/2$-balanced by the pair $(1,3)$, $J(P)$ is not $1/2$-balanced, as evidenced in the chart in Figure~\ref{fig:J234Chain} whose entries are $e(P+xy)$ for every $x$ and $y$ not comparable in $J(P)$. Since $J(P)$ has $14$ linear extensions, we can see no pair is $1/2$-balanced.
However, adding an extra condition allows us to prove that $J(P)$ is $1/2$-balanced.

\begin{figure}[t!]
\centering
%\hspace{3mm}
\begin{minipage}{.55\textwidth}
\centering
    \begin{tikzpicture}[scale=0.75]
  	\filldraw (-7,2) circle (2pt) node(w2){};
  	\filldraw (-7,3) circle (2pt) node(w3){};
  	\filldraw (-7,4) circle (2pt) node(w4){};
  	\filldraw (-6,3) circle (2pt) node(w1){};
  	\draw (-7.25,2) node {$2$};
  	\draw (-7.25,3) node {$3$};
  	\draw (-7.25,4) node {$4$};
  	\draw (-5.75,3) node {$1$};
  	\draw (-6,1) node {$P$};
  	\draw[-] (w2) -- (w3) -- (w4);
  	\filldraw (0,0) circle (2pt) node(v0){};
	\filldraw (2,1) circle (2pt) node(v1){};
	\filldraw (-2,1) circle (2pt) node(v2){};
	\filldraw (2,3) circle (2pt) node(v3){};
	\filldraw (-2,3) circle (2pt) node(v4){};
	\filldraw (2,5) circle (2pt) node(v5){};
	\filldraw (-2,5) circle (2pt) node(v6){};
	\filldraw (0,6) circle (2pt) node(v7){};
	\draw (0,-0.4) node {$\emptyset$};
	\draw (2.5,1) node {$\{1\}$};
	\draw (-2.5,1) node {$\{2\}$};
	\draw (2.75,3) node {$\{1,2\}$};
	\draw (-2.75,3) node {$\{2,3\}$};
	\draw (3,5) node {$\{1,2,3\}$};
	\draw (-3,5) node {$\{2,3,4\}$};
	\draw (0,6.35) node {$\{1,2,3,4\}$};
	\draw (-1.5,-0.5) node {$J(P)$};
	\draw[-] (v0) -- (v1) -- (v3) -- (v5) -- (v7);
	\draw[-] (v0) -- (v2) -- (v3);
	\draw[-] (v2) -- (v4) -- (v5);
	\draw[-] (v4) -- (v6) -- (v7);
	\end{tikzpicture}
\end{minipage}
\hspace{3mm}
\begin{minipage}{.4\textwidth}
	\centering
	\[ \begin{array}{r|ccc}
	_x\diagdown^y & \{2\} & \{2,3\} & \{2,3,4\} \\ \hline
	\{1\} & 5 & 10 & 13 \\
	\{1,2\} & & 4 & 10 \\
	\{1,2,3\} & & & 5  \\
	\end{array}\]
\end{minipage}
	\caption{A $4$ element poset $P$, its corresponding $J(P)$, and a chart with values $e(P+xy)$}
	\label{fig:J234Chain}		
\end{figure}
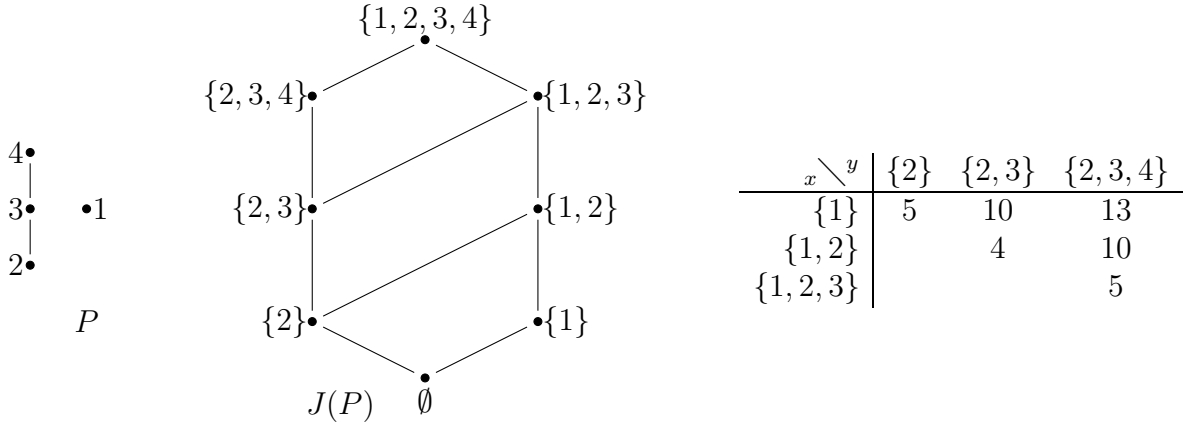

\begin{prop}\label{prop:distributive}
If $P$ has an automorphism with cycle of length $2$, then $J(P)$ is $1/2$-balanced.
\end{prop}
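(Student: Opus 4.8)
The plan is to lift the given automorphism of $P$ to an automorphism of $J(P)$ that again has a cycle of length~$2$, and then apply Proposition~\ref{prop:auto2cycle}.

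First I would record the standard fact that an automorphism $\phi$ of $P$ induces an automorphism $\Phi$ of $J(P)$, defined by $\Phi(I)=\phi(I)=\{\phi(z) : z\in I\}$. Checking this is routine: since $\phi$ and $\phi^{-1}$ are both order isomorphisms of $P$, the image $\phi(I)$ of a lower order ideal $I$ is again a lower order ideal; the map $\Phi$ visibly preserves inclusion because $\phi$ is a bijection; and $\Phi$ is invertible, with inverse the automorphism of $J(P)$ induced by $\phi^{-1}$.

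Next, let $x,y\in P$ be the two elements interchanged by $\phi$, so $\phi(x)=y$ and $\phi(y)=x$. I would apply $\Phi$ to the principal order ideals $I_x=\{z\in P : z\le x\}$ and $I_y=\{z\in P : z\le y\}$, both of which are elements of $J(P)$. Because $\phi$ is an order isomorphism, $\Phi(I_x)=\{z : z\le\phi(x)\}=I_y$ and, symmetrically, $\Phi(I_y)=I_x$. Finally, $I_x\ne I_y$: were they equal, then $x\in I_x=I_y$ would give $x\le y$ while $y\in I_y=I_x$ would give $y\le x$, forcing $x=y$, contrary to hypothesis. Hence $(I_x,I_y)$ is a cycle of length~$2$ of $\Phi$, and Proposition~\ref{prop:auto2cycle} applied to $J(P)$ shows that $J(P)$ is $1/2$-balanced, with $(I_x,I_y)$ a $1/2$-balanced pair.

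The argument is short, and I do not foresee a real obstacle. The one point that needs care is producing a genuine $2$-cycle of the induced automorphism rather than merely a nontrivial automorphism, since the latter would only yield a $1/3$-balanced pair via Theorem~\ref{thm:automorph}. Using the principal order ideals generated by the two swapped elements sidesteps this, as the distinctness of $x$ and $y$ translates at once into the distinctness of $I_x$ and $I_y$; any other pair of ideals swapped by $\Phi$ would serve equally well, but this choice makes distinctness transparent.
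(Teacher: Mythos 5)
Your proposal is correct and follows essentially the same route as the paper: both lift $\phi$ to the induced automorphism of $J(P)$ and exhibit the principal lower order ideals $I_x$ and $I_y$ as a $2$-cycle, then invoke Proposition~\ref{prop:auto2cycle}. Your explicit check that $I_x\ne I_y$ is a small point of care the paper leaves implicit, but it does not change the argument.
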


\begin{proof}
	Let $\phi:P\to P$ be an automorphism with $\phi(x)=y$ and $\phi(y)=x$ for some distinct $x,y\in P$. This induces an automorphism $\overline{\phi}$ of $J(P)$, given by $\overline{\phi}(I) = \{\phi(w):w\in I\}$ for $I\in J(P)$. 
	We claim that $\overline{\phi}$ has a cycle of length $2$, namely that $\overline{\phi}(I_x) = I_y$ and $\overline{\phi}(I_y) =I_x$ where $I_x,I_y$ are the  lower order ideals generated by $x,y$ respectively.
	
	We will show $\overline{\phi}(I_x) = I_y$, as the proof of the other equality is similar. Let $z\in \overline{\phi}(I_x)$, so $z = \phi(w)$ for some $w\in I_x$. This means that $w\le  x$, and so $\phi(w)\le \phi(x) = y$. Therefore, $z\le y$ and we have $z\in I_y$. Hence, $\overline{\phi}(I_x)\subseteq I_y$. The proof of the other set containment is similar.
	Thus, we have proven the claim. Since $\overline{\phi}$ has a cycle of length $2$, by Proposition 2.3, $J(P)$ is 1/2-balanced.
%Let $\phi:P\to P$ be an automorphism with $\phi(x)=y$ and $\phi(y)=x$ for some distinct $x,y\in P$. This induces an automorphism $\overline{\phi}$ of $J(P)$, given by $\overline{\phi}(I) = \{\phi(w):w\in I\}$ for $I\in J(P)$. 
%We claim that $\overline{\phi}$ has a cycle of length $2$, namely that $\overline{\phi}(L_x) = L_y$ and $\overline{\phi}(L_y) = L_x$.  
%
%We will show $\overline{\phi}(L_x) = L_y$, as the proof of the other equality is similar. Let $z\in \overline{\phi}(L_x)$, so $z = \phi(w)$ for some $w\in L_x$. This means that $w< x$, and so $\phi(w)< \phi(x) = y$. Therefore, $z< y$ and we have $z\in L_y$. Hence, $\overline{\phi}(L_x)\subseteq L_y$. The proof of the other set containment is similar.
%Thus, we have proven the claim. Since $\overline{\phi}$ has a cycle of length $2$, by Proposition~\ref{prop:auto2cycle}, $J(P)$ is 1/2-balanced.
\end{proof}

This leads to another proof that Boolean lattices are $1/2$-balanced.

\begin{cor}
For $n\geq 2$, the Boolean lattice $B_n$ is $1/2$-balanced.
\end{cor}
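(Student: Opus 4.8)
The plan is to realize $B_n$ as $J(P)$ for a well-chosen poset $P$ and then simply quote Proposition~\ref{prop:distributive}. The natural candidate is $P = A_n$, the $n$-element antichain (the poset on $\{1,\dots,n\}$ with no order relations).

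First I would check the identification $J(A_n) \cong B_n$ \emph{as posets}. Since $A_n$ has no nontrivial order relations, every subset of its ground set is vacuously a lower order ideal, and conversely every lower order ideal is a subset; moreover the containment order on these ideals is literally inclusion of subsets of $[n]$. By definition this is $B_n$, so $J(A_n) = B_n$.

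Next I would exhibit an automorphism of $A_n$ with a cycle of length $2$. Because $A_n$ has no relations to preserve, \emph{every} permutation of $\{1,\dots,n\}$ is an automorphism of $A_n$; in particular, for $n \geq 2$ the transposition that interchanges the elements $1$ and $2$ and fixes everything else is an automorphism with a $2$-cycle. Applying Proposition~\ref{prop:distributive} with this $P = A_n$ then yields that $J(A_n) = B_n$ is $1/2$-balanced for all $n \geq 2$.

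There is essentially no obstacle here: the argument is a one-line application of Proposition~\ref{prop:distributive} once $B_n$ is recognized as the ideal lattice of an antichain. The only point requiring (minimal) care is confirming that the isomorphism $J(A_n) \cong B_n$ respects the partial orders and not merely the underlying sets, which is immediate from the definitions. (Note that this route genuinely uses the special structure of $A_n$ — namely its abundance of automorphisms — since, as the example in Figure~\ref{fig:J234Chain} shows, $J(P)$ need not be $1/2$-balanced for an arbitrary $P$.)
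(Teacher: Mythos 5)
Your argument is correct and is essentially identical to the paper's own proof: both realize $B_n$ as $J(P)$ for $P$ the $n$-element antichain, take the automorphism swapping $1$ and $2$, and apply Proposition~\ref{prop:distributive}. No issues.
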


\begin{proof}
The Boolean lattice $B_n$ is the distributive lattice corresponding to the poset $P$ with $n$ elements and no relations. There is an automorphism on $P$ that swaps elements $1$ and $2$ and is the identity on the remaining elements. Since $P$ has an automorphism with a cycle of length $2$, $B_n$ is $1/2$-balanced  by Proposition~\ref{prop:distributive}.
\end{proof}

We note that if we use the construction in Proposition~\ref{prop:distributive} on the $\phi$ from the proof  of the previous corollary, then the resulting $\overline{\phi}$ is exactly the map used to prove Corollary~\ref{cor:Bn}.  Also, the ideas in this subsection raise some interesting questions.

\begin{question}
Are all distributive lattices  $1/3$-balanced? What other characteristics of $P$  would imply that $J(P)$ is $1/2$-balanced?
\end{question}

\subsection{Products of Two Chains}\label{sec:2Chains}

Let $C_n$ be the chain with $n$ elements. This section will be concerned with the product of two chains $C_m$ and $C_n$, with $m,n\geq 2$.  See Figure~\ref{fig:C3xC4+3x4diagram} for the Hasse diagram of $C_3\times C_4$.  Such products can also be interpreted in terms of Young diagrams.  An {\em integer partition} is a weakly decreasing sequence $\la=(\la_1,\dots,\la_l)$ of positive integers.  The corresponding {\em shape} is an array of $l$ rows of left-justified boxes, also called cells, with $\la_i$ boxes in row $i$.
The Young diagram of shape $\lambda = (4,4,4)$ is displayed on the right in Figure~\ref{fig:C3xC4+3x4diagram} and the diagram of $\la=(4,4,2)$ is given  in Figure~\ref{fig:HooklengthEx} (ignoring the entries in the boxes for now).  
We often make no distinction between an integer partition and its Young diagram.
Let $(i,j)$ denote the box in row $i$ and column $j$ of $\la$.  Then we turn this Young diagram into a poset by ordering the boxes component-wise: $(i,j)\le (i',j')$ if and only if $i\le i'$ and $j\le j'$.  It should now be clear that the posets $C_m\times C_n$ and  $\la=(n^m)$ are isomorphic where $n^m$ represents $n$ repeated $m$ times as in Figure~\ref{fig:C3xC4+3x4diagram}.

The linear extensions of $\la$ can be thought of as a certain type of tableau.  A {\em standard Young tableau (SYT) of shape $\la$}, $Y$,  is a filling of the boxes of $\la$ with the integers $1,\dots,n=\sum_i\la_i$ so that the rows and columns increase.
An SYT of shape $\la=(4,4,2)$ is displayed on the left in Figure~\ref{fig:HooklengthEx}.  The SYT of shape $\la$ are in bijection with the linear extensions of $\la$ where $k$ is the entry in box $(i,j)$ if and only if $(i,j)$ is the $k$th element of the linear extension.  We will freely go back and forth between these two viewpoints.

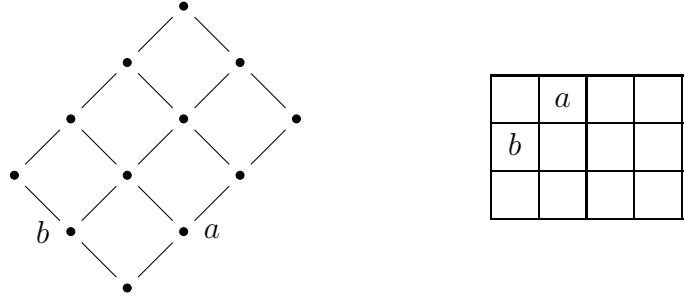
\begin{figure}[t!]
\centering
	\begin{minipage}{.3\textwidth}
 		\centering
  		\begin{tikzpicture}[scale=0.75]
  		\filldraw (3,0) circle (2pt) node(v0){};
  		\filldraw (4,1) circle (2pt) node(v1){};
  		\draw (1.5,1) node {$b$};
  		\draw (4.5,1) node {$a$};
  		\filldraw (2,1) circle (2pt) node(v2){};
  		\filldraw (5,2) circle (2pt) node(v3){};
  		\filldraw (3,2) circle (2pt) node(v4){};
  		\filldraw (1,2) circle (2pt) node(v5){};
  		\filldraw (4,3) circle (2pt) node(v6){};
  		\filldraw (2,3) circle (2pt) node(v7){};
  		\filldraw (6,3) circle (2pt) node(v8){};
  		\filldraw (3,4) circle (2pt) node(v9){};
  		\filldraw (5,4) circle (2pt) node(v10){};
  		\filldraw (4,5) circle (2pt) node(vT){};
  		\draw[-] (v0)--(v1)--(v3)--(v8)--(v10)--(vT)--(v9)--(v7)--(v5)--(v2)--(v0);
  		\draw[-] (v2)--(v4)--(v6)--(v10);
  		\draw[-] (v1)--(v4)--(v7);
  		\draw[-] (v3)--(v6)--(v9);
		\end{tikzpicture}
	\end{minipage}
\hspace{5mm}
	\begin{minipage}{.3\textwidth}
  		\centering
  		\ytableaushort{\none a, b}*{4,4,4}
	\end{minipage}
	\caption{The poset $C_3\times C_4$ and its corresponding diagram}
		\label{fig:C3xC4+3x4diagram}
\end{figure}

Unlike many other demonstrations that a poset is $1/3$-balanced, our proof for $C_m\times C_n$ finds the exact value of $\P(a\prec b)$ for a pair of elements $(a,b)$. It also provides a nice application of the famous Hooklength Formula, equation~(\ref{hook}) below.
Consider the cells $a=(1,2)$ and $b=(2,1)$ as labeled in Figure~\ref{fig:C3xC4+3x4diagram}. 
In order to compute how many linear extensions of $C_m\times C_n$ have $a\prec b$, we will compute how many SYT have cell $(1,2)$ filled with a smaller number than cell $(2,1)$.   
Since the entry $2$ must go in one of these two cells, this assumption forces  the SYT to have the $(1,1)$ cell filled with a $1$ and the $(1,2)$ cell filled with a $2$. Flipping and rotating by $180$ degrees, one sees that this is equivalent to counting the SYT of shape $(n^{m-1}, n-2)$.

To prove the next lemma, we will need the hooklength formula for $f^\lambda$, the number of SYT of shape $\lambda$. For a given cell $(i,j)$ in a diagram of shape $\lambda$, its {\em hook} is the set of all the cells weakly to its right and in the same row, together with all cells weakly below it and in the same column, and its hooklength $h_\lambda(i,j)$ is the number of cells in its hook.
On the right in Figure~\ref{fig:HooklengthEx}, each cell of $\la=(4,4,2)$  is labeled with its hooklength. The hooklength formula for a diagram with $n$ cells is
\begin{equation}
\label{hook}
 f^\lambda = \dfrac{n!}{\prod h_{\lambda}(i,j)}, 
\end{equation}
where the product is over all cells $(i,j)$ in $\lambda$. Returning to our example shape $(4,4,2)$, we see that the corresponding number of SYT is
\[ f^{(4,4,2)} = \dfrac{10!}{6\cdot 5^2\cdot 4\cdot 3\cdot 2^3 \cdot 1^2} = 252. \]

\begin{figure}[t!]
\centering
$Y=$
\begin{ytableau}
1&3&4&8\\
2&6&7&9\\
5&10
\end{ytableau}
\hspace{50pt}
hooklengths:
\begin{ytableau}
6&5&3&2\\
5&4&2&1\\
2&1
\end{ytableau}
	\caption{A SYT of shape $(4,4,2)$ and a diagram of its hooklengths}
	\label{fig:HooklengthEx}
\end{figure}

\begin{lem}\label{lem:SYT_grid}
Let $m\geq 1$ and $n\geq 3$.
We can relate the number of standard Young tableaux of shape  $(n^m)$ and of shape $(n^{m-1},n-2)$ by the following equality:
\[ f^{(n^{m-1}, n-2)} = \frac{(n-1)(m+1)}{2(mn-1)} f^{(n^m)}. \]
\end{lem}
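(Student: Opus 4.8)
The plan is to apply the hooklength formula~(\ref{hook}) to each of the two shapes and compare the products cell by cell. Write $R=(n^m)$ for the $m\times n$ rectangle and $S=(n^{m-1},n-2)$ for the shape obtained from $R$ by deleting the two rightmost cells $(m,n-1)$ and $(m,n)$ of its bottom row. In the rectangle the hooklengths are immediate: $h_R(i,j)=(m-i)+(n-j)+1$. The crucial observation is that removing those two corner cells changes the hooklength of a cell of $S$ only when that cell lies strictly above one of the deleted cells, that is, in column $n-1$ or $n$ and in one of the rows $1,\dots,m-1$, or else lies in the bottom row of $S$; every other hooklength of $S$ equals the corresponding hooklength of $R$ and contributes nothing to the ratio $\prod_R h_R\big/\prod_S h_S$.

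So I would split the cells where the hooklength changes into two groups and evaluate two short telescoping products. First, for $1\le i\le m-1$ and $j\in\{n-1,n\}$ one has $h_S(i,j)=h_R(i,j)-1$, and for each fixed $i$ the two factors multiply to $\frac{m-i+2}{m-i+1}\cdot\frac{m-i+1}{m-i}=\frac{m-i+2}{m-i}$; taking the product over $i=1,\dots,m-1$ telescopes to $\frac{m(m+1)}{2}$. Second, the bottom row of $S$ consists of the cells $(m,j)$ with $1\le j\le n-2$, where $h_S(m,j)=n-1-j$, so that row contributes $\prod_{j=1}^{n-2}(n-1-j)=(n-2)!$, whereas the bottom row of $R$ runs over $1\le j\le n$ and contributes $\prod_{j=1}^{n}(n-j+1)=n!$; the net factor from the bottom rows (which also absorbs the two extra rectangle cells that $S$ lacks) is $n!/(n-2)!=n(n-1)$. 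Multiplying the two groups gives $\dfrac{\prod_{(i,j)\in R}h_R(i,j)}{\prod_{(i,j)\in S}h_S(i,j)}=\dfrac{m(m+1)}{2}\cdot n(n-1)$.

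Finally, since $R$ has $mn$ cells and $S$ has $mn-2$ cells, the numerators in~(\ref{hook}) contribute $\frac{(mn-2)!}{(mn)!}=\frac{1}{mn(mn-1)}$, so
\[ \frac{f^{(n^{m-1},n-2)}}{f^{(n^m)}}=\frac{1}{mn(mn-1)}\cdot\frac{m(m+1)\,n(n-1)}{2}=\frac{(n-1)(m+1)}{2(mn-1)}, \]
which is the claimed identity. (When $m=1$ both shapes are single rows, the products above are empty or trivial, and the formula correctly reduces to $1=\frac{2(n-1)}{2(n-1)}$.) The only real difficulty here is the bookkeeping: one must keep straight that the bottom-row cells of $S$ have hooklengths \emph{two} less than in $R$, while the cells in the last two columns of the upper $m-1$ rows drop by only \emph{one}, and one must not forget the two rectangle cells absent from $S$. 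Once the three types of cells are sorted out correctly, everything reduces to the two telescoping products above.
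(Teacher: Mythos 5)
Your proposal is correct and follows essentially the same approach as the paper: both apply the hooklength formula to the two shapes and compare the hooklengths cell by cell, with the only change being in the last two columns and the bottom row, arriving at the same ratio $\frac{m(m+1)}{2}\cdot n(n-1)$ for the hook products and $\frac{1}{mn(mn-1)}$ for the factorials. The paper merely organizes the bookkeeping slightly differently (it folds the two deleted corner cells into the last-two-columns comparison and extracts the factor of $2$ from the bottom-row comparison), but the computation is the same.
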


\begin{proof}
Let $\lambda=(n^m)$ and $\mu=(n^{m-1},n-2)$.  We will proceed by first describing which factors differ between $f^\lambda$ and $f^\mu$. We can observe that the hooklengths only disagree between $\lambda$ and $\mu$ in those cells in the last two columns and those in the last row. The last two columns of $\lambda$ have hooklengths of $m+1, m, \ldots, 2$ and $m, m-1, \ldots, 1$, while in $\mu$ the last two columns have hooklengths $m, m-1, \ldots, 2$ and $m-1, m-2, \ldots, 1$. Overall, $f^\mu$ is missing a factor of $(m+1)m$ which appears in the denominator of $f^\lambda$. Similarly, the hooklength values of the last row of $\lambda$, excluding the ones in the last two columns which have already been accounted for, are $n,n-1,\dots,3$, while those in $\mu$ are $n-2,n-3,\dots,1$.  So our formula for $f^\mu$ is missing a factor of $n(n-1)$ from the denominator and a factor of $2$ from the numerator.  Finally $f^\lambda$ has a numerator of $(mn)!$ while $\mu$ has a numerator of $(mn-2)!$, so there is a factor of $(mn)(mn-1)$ we need to remove from the numerator of $f^\lambda$.   Overall, our hooklength formula for $\mu$ derived from $f^\lambda$ is
\[ f^\mu \quad = \quad \dfrac{n(n-1)(m+1)m}{2(mn)(mn-1)} f^\lambda \quad = \quad \dfrac{(n-1)(m+1)}{2(mn-1)} f^\lambda, \]
as desired.
\end{proof}

\begin{thm}\label{thm:2chains}
Let $C_m$ and $C_n$ be chains of lengths $m\geq 2$ and $n\geq 2$, respectively. Then their product $C_m\times C_n$ has a $1/3$-balanced pair. 
%elements $a$ and $b$ such that $1/3 \leq \P(a\prec b) \leq 2/3.$
\end{thm}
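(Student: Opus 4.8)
The plan is to pass to the Young-diagram picture $C_m\times C_n\cong(n^m)$ just set up, in which linear extensions become standard Young tableaux, and to take as candidate the pair $a=(1,2)$, $b=(2,1)$, the two cells covering the bottom cell $(1,1)$. The quickest route to the bare conclusion is to observe that $(a,b)$ is an almost twin pair: the only cell below either of them is $(1,1)$, so $L_a=L_b=\{(1,1)\}$, while $U_a\setminus U_b$ is the rest of the first row $\{(1,3),(1,4),\dots,(1,n)\}$ and $U_b\setminus U_a$ is the rest of the first column $\{(3,1),(4,1),\dots,(m,1)\}$, each a (possibly empty) chain. Theorem~\ref{thm:asymmetricpair} then already gives that $C_m\times C_n$ is $1/3$-balanced. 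What makes this case worth a separate treatment, though, is that one can pin down $\P(a\prec b)$ exactly, which the almost twin machinery never does.

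To get the exact value I would run the bijection indicated before Lemma~\ref{lem:SYT_grid}: in a standard Young tableau with $a\prec b$ the entry $2$ is forced into cell $(1,2)$, and deleting cells $(1,1),(1,2)$ and rotating the diagram $180^\circ$ matches such tableaux with the standard Young tableaux of shape $(n^{m-1},n-2)$. Hence $\P(a\prec b)=f^{(n^{m-1},n-2)}/f^{(n^m)}$, and Lemma~\ref{lem:SYT_grid} turns this into
\[
\P(a\prec b)=\frac{(n-1)(m+1)}{2(mn-1)}.
\]
Here one assumes $n\ge 3$, transposing via $C_m\times C_n\cong C_n\times C_m$ if necessary so that the longer chain indexes the columns; the one rectangle not reached this way is $C_2\times C_2$, which is $1/2$-balanced at once by Proposition~\ref{prop:auto2cycle} applied to the transposition automorphism.

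It then remains to check that this closed form lies in $[1/3,2/3]$. Clearing denominators, $\P(a\prec b)\ge\tfrac13$ is equivalent to $n(m+3)\ge 3m+1$ and $\P(a\prec b)\le\tfrac23$ to $n(m-3)\ge 1-3m$; the first holds whenever $n\ge 3$ (or $n=2$ and $m\le 5$) and the second whenever $m\ge 3$ (or $m=2$ and $n\le 5$). So $(a,b)$ is a $1/3$-balanced pair for all $m,n\ge 2$ except when $\{m,n\}=\{2,c\}$ with $c\ge 6$, where $\P(a\prec b)=\frac{3(c-1)}{2(2c-1)}>\tfrac23$. In those remaining cases one falls back on the first paragraph (the pair is still almost twin), or alternatively notes that $C_2\times C_c$ is $N$-free, since building the poset $N$ inside it would require three distinct row indices while there are only two, so the $N$-free result~\cite{Zag12} applies.

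The step I expect to be the main obstacle is precisely this last wrinkle: the natural pair has a tidy formula but its probability creeps past $2/3$ for long thin rectangles, so the exact-value computation, which is the real point of the section, cannot close the argument by itself and has to be supplemented by the almost twin (or $N$-free) observation. Everything else — verifying the almost twin conditions, the $180^\circ$-rotation bijection, the hooklength bookkeeping hidden inside Lemma~\ref{lem:SYT_grid}, and the two linear inequalities — is routine.
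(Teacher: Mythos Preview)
Your argument is correct and complete: the almost twin observation in your first paragraph already settles the theorem outright, and your exact-value computation via Lemma~\ref{lem:SYT_grid} together with the inequality analysis is accurate. The paper's proof differs only in how it disposes of the thin rectangles. It assumes $m\le n$, invokes Linial's width-$2$ result (Theorem~\ref{thm:width2}) for $m=2$, handles $m=n=3$ by the transposition automorphism, and then runs the hooklength computation only for $m\ge3$, $n\ge4$, where both inequalities in~\eqref{NMbdd} go through directly. Your route instead pushes the exact formula as far as it will go and falls back on Theorem~\ref{thm:asymmetricpair} for $m=2$, $n\ge6$; this has the virtue of making transparent that the almost twin argument is really what drives the general Young-diagram theorem in Section~\ref{sec:OtherDiagrams}, of which the rectangular case is the simplest instance.

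One correction: your ``alternatively'' clause is wrong. The poset $C_2\times C_c$ is \emph{not} $N$-free once $c\ge3$. The cover graph contains the induced path $(2,j-1)\,$---$\,(2,j)\,$---$\,(1,j)\,$---$\,(1,j+1)$ for any $2\le j\le c-1$, and this is precisely the forbidden $N$. Your intuition that $N$ needs three row indices confuses $N$ (height~$2$) with something taller. Since you offer this only as an alternative to the almost twin fallback, the proof survives; just delete the $N$-free remark.
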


\begin{proof}
Without loss of generality, we can let $m\le n$. Let $P=C_m\times C_n$. If $m=2$, then $P$ has width 2, and so is $1/3$-balanced by a result of Linial (see Theorem~\ref{thm:width2}). If $m=n=3$, then $P$ has a non-trivial automorphism, and so by Theorem~\ref{thm:automorph}, $P$ has a $1/3$-balanced pair.

Next, let $m\geq 3$ and $n\geq 4$. Consider the cells $a=(1,2)$ and  $b=(2,1)$.  We claim that $(a,b)$ are a $1/3$-balanced pair. As discussed at the beginning of this subsection, $e(P)=f^\la$ and $e(P+ab)=f^\mu$, where $\la=(n^m)$ and $\mu=(n^{m-1},n-2)$. Hence, by Lemma~\ref{lem:SYT_grid}, we know that 
\[ e(P + ab) =  \frac{(n-1)(m+1)}{2(mn-1)}e(P). \]
It remains to be shown that
\begin{equation}
\frac{1}{3}\leq  \frac{(n-1)(m+1)}{2(mn-1)} \leq \frac{2}{3} \label{NMbdd}
\end{equation}
for all $m\geq 3$, $n\geq 4$. 
For the first inequality, cross multiply and bring everything to one side to get the equivalent inequality $(mn-1)+3(n-m)\ge0$. This inequality is true since $n\ge m$ and $mn\ge 1$.  

For the second inequality, proceed in the same manner to get $mn+3(m-n)-1\ge0$.  By the lower bounds for $m,n$ we have $(m-3)(n-4)\ge0$.  So it suffices to prove $mn+3m-3n-1\ge (m-3)(n-4)$.  Moving everything to one side yet again gives the equivalent inequality $7m-13\ge0$ which is true since $m\ge3$.

Therefore, we have shown that (\ref{NMbdd}) holds, and so $(a,b)$ is a $1/3$-balanced pair in $P$.
\end{proof}

\begin{question}
We were motivated in part to study products of chains as they are isomorphic to divisor lattices. Can one show that a product of $k$ chains is $1/3$-balanced, for $k\ge 3$?
\end{question}

%%%%%%%%%%%%%%%%%%%%%%%%%%%%%%%%%%

\section{Other Diagrams}\label{sec:OtherDiagrams}

\begin{figure}[t!]
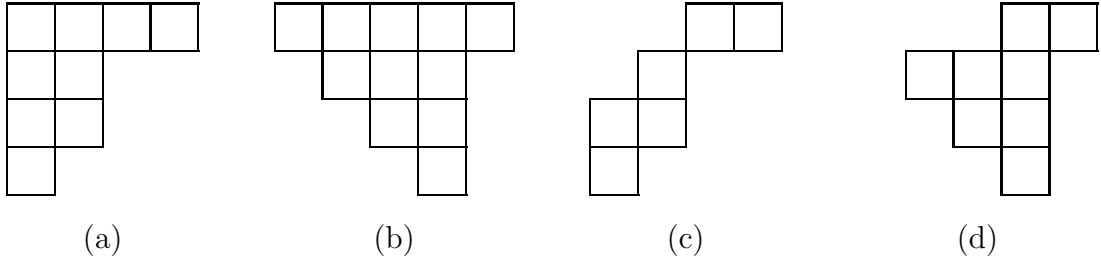

\centering
	\begin{minipage}{.2\textwidth}
		\centering
		\ydiagram{4,2,2,1}
		\begin{center}
		(a)
		\end{center}
	\end{minipage}
	\hspace{3mm}
	\begin{minipage}{.2\textwidth}
		\centering
		\ydiagram{5,1+3,2+2,3+1}
		\begin{center}
		(b)
		\end{center}
	\end{minipage}
	\hspace{3mm}
	\begin{minipage}{.2\textwidth}
		\centering
		\ydiagram{2+2,1+1,2,1}
		\begin{center}
		(c)
		\end{center}
	\end{minipage}
	\hspace{3mm}
	\begin{minipage}{.2\textwidth}
		\centering
		\ydiagram{3+2,1+3,2+2,3+1}
		\begin{center}
		(d)
		\end{center}
	\end{minipage}
	\caption{(a) A left-justified Young diagram of shape $(4,2^2,1)$, (b) a shifted diagram of shape $(5,3,2,1)$, (c) a skew left-justified diagram of shape $(4,2^2,1)\ /\ (2,1)$, \\and (d) a shifted skew diagram of shape $(5,3,2,1)\ /\ (3)$}
	\label{fig:ManyDiagrams}
\end{figure}

In Section~\ref{sec:2Chains}, we considered the product of two chains as a rectangular Young diagram, and the linear extensions of the poset corresponded to the standard Young tableaux of that Young diagram.  
Given Theorem~\ref{thm:2chains}, it is natural  to consider other posets that come from other diagrams. 
Suppose  $\lambda_1 > \lambda_2> \cdots > \lambda_k$, in which case $\lambda=(\la_1,\dots,\la_k)$ is called a 
{\em strict partition}. 
The \emph{shifted diagram} corresponding to a strict partition $\lambda$ indents row $i$ so that it begins at the diagonal cell $(i,i)$. An example is given in Figure~\ref{fig:ManyDiagrams}(b). A third type of diagram is a \emph{skew diagram}, $\lambda/\mu$, which is the set-theoretic difference between diagrams $\lambda=(\lambda_1, \ldots, \lambda_k)$ and $\mu=(\mu_1,\ldots,\mu_l)$ such that $\mu\subseteq \lambda$, that is, $l\leq k$ and $\mu_i\leq \lambda_i$ for each $1\leq i\leq l$. A skew diagram can be either \emph{left-justified}, as seen in Figure~\ref{fig:ManyDiagrams}(c), or \emph{shifted}, as seen in Figure~\ref{fig:ManyDiagrams}(d).
Note that when $\mu$ is empty then $\la/\mu=\la$.  Also, we will now use the term ``Young diagram" to refer to any of the four possibilities we have described.

Any Young diagram can be turned into a poset using the same ordering on the cells as before.  In addition, a {\em standard Young tableau} can be obtained from a diagram with $n$ boxes by filling them with the numbers $1,\dots, n$ so that rows and columns increase.  Such tableaux correspond bijectively to linear extensions of the corresponding poset.
We next present a generalized version of Theorem~\ref{thm:2chains} for arbitrary shapes.

\begin{thm}
Let $P_{\lambda/\mu}$ be the poset corresponding to the Young diagram $\lambda/\mu$.  If $P_{\lambda/\mu}$ is not a chain, then it is $1/3$-balanced.
\end{thm}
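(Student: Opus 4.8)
The plan is to reduce the general Young diagram case to one of the tools already available: Theorem~\ref{thm:asymmetricpair} on almost twin pairs, Proposition~\ref{prop:auto2cycle} on automorphisms with a 2-cycle, or Theorem~\ref{thm:2chains} (and the width-2 result) for small cases. The guiding idea is that a skew or shifted Young diagram, viewed as a poset, is extremely ``rigid'': each cell $(i,j)$ covers at most two cells, $(i-1,j)$ and $(i,j-1)$, and is covered by at most two, so the local structure is very constrained. First I would set up notation: let $D=\lambda/\mu$ be the diagram (left-justified or shifted, possibly skew), with cells ordered componentwise, and assume $P_D$ is not a chain, so $D$ has at least two cells in incomparable position — equivalently $D$ contains two cells in the same row or (more usefully) two cells $(i,j)$ and $(i',j')$ with $i<i'$, $j>j'$ that are ``close'' in the sense that no cell lies strictly between them.

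The heart of the argument should be to locate a \emph{minimal incomparable pair} and show it is almost twin. Concretely, look at the two cells $a=(i,j)$ and $b=(i+1,j-1)$ where $(i,j-1)$ and $(i+1,j)$ are not both in $D$ — i.e.\ a ``corner-type'' incomparability. (If $D$ is not a chain such a configuration exists: take the leftmost cell of the lowest row that has a cell in some row above it, or dualize.) For such a pair I would verify the almost-twin conditions of Definition~\ref{def:almosttwinpair}: the strict lower ideals $L_a$ and $L_b$ should coincide, or after passing to the dual the strict upper ideals should coincide, because the only cells below $a$ that are not below $b$ (or vice versa) live along a single column or row and hence form a chain; the key combinatorial fact is that $U_a\setminus U_b$ consists of cells weakly right of column $j$ in rows $\le i$, intersected with the cells that are not $\ge b$, and in a Young diagram this sliver is a horizontal or vertical strip, hence a chain. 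The case analysis — left-justified vs.\ shifted, and whether the ``defect'' sits above or below, forcing a passage to the dual — is where most of the bookkeeping lies.

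When no such almost-twin corner pair can be found, the diagram must be of a special restricted form, and I expect these residual cases to be exactly the ones already covered: a single row or column (a chain, excluded by hypothesis), a product of two chains / a rectangle handled by Theorem~\ref{thm:2chains}, a width-2 poset handled by Theorem~\ref{thm:width2}, or a small shifted shape like the staircase $\delta_k$ which has an obvious order-two automorphism (reflection across the main diagonal is not available for skew shapes in general, but for the leftover symmetric shapes it is) so Proposition~\ref{prop:auto2cycle} applies. I would organize the proof as: (1) if $P_D$ has width $\le 2$, invoke Theorem~\ref{thm:width2}; (2) otherwise exhibit a cell pair and check it is almost twin, applying Theorem~\ref{thm:asymmetricpair}; (3) handle the finitely many exceptional shapes left over by hand or by automorphism.

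The main obstacle I anticipate is step (2) in the \emph{shifted} case: along the main diagonal the order relations behave differently (a diagonal cell $(i,i)$ is covered only by $(i+1,i+1)$ in the shifted poset, not by $(i+1,i)$), so the ``strip is a chain'' argument needs a separate verification near the diagonal, and one must be careful that the chosen incomparable pair does not straddle the diagonal in a way that makes $U_a\setminus U_b$ two-dimensional. Making the selection of the pair $(a,b)$ precise enough that the chain condition is automatic — rather than checking it case by case — is the delicate point, and I would spend the bulk of the write-up pinning down that selection rule.
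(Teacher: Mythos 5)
Your overall strategy---locate an almost twin pair and invoke Theorem~\ref{thm:asymmetricpair}---is exactly the paper's, but the plan as written has a genuine gap at its center: restricting attention to ``corner-type'' pairs $a=(i,j)$, $b=(i+1,j-1)$ does not suffice, and your conjectured list of residual cases (chains, rectangles, width~$2$ posets, symmetric shapes) does not cover what is left over. For a corner pair in a skew left-justified shape the four difference sets $L_a\setminus L_b$, $L_b\setminus L_a$, $U_a\setminus U_b$, $U_b\setminus U_a$ are automatically chains (each is a piece of a single row or column), so the real issue is the equality condition: such a pair is almost twin if and only if either ($a$ is the top cell of its column and $b$ is the leftmost cell of its row) or ($a$ is the rightmost cell of its row and $b$ is the bottom cell of its column). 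Both alternatives can fail for \emph{every} corner pair in the diagram. Take $\lambda/\mu=(3,3,3,3,2,2,1,1)/(2,2,1,1)$: rows $1,2$ occupy column $3$, rows $3,4$ occupy columns $2,3$, rows $5,6$ occupy columns $1,2$, and rows $7,8$ occupy column $1$. This poset is connected, is not a chain, has width $3$ (its three minimal elements $(1,3),(3,2),(5,1)$ form an antichain), is not a rectangle, and its only evident symmetry is a fixed-point-free anti-automorphism, so none of your fallback tools apply. One checks directly that each of its six corner pairs, e.g.\ $(2,3),(3,2)$ or $(5,2),(6,1)$, fails both conditions above, so your selection rule produces nothing. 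The paper handles precisely such shapes (its case where $\mu$ is a staircase with all multiplicities at least $2$ and $\lambda_1=\mu_1+1$) by choosing an almost twin pair in \emph{non-adjacent} rows---here $(1,3)$ and $(3,2)$, which have equal (empty) strict lower ideals while their upper-ideal differences are the chains $\{(2,3)\}$ and $\{(4,2),(5,2),(6,2)\}$.

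Beyond this specific failure, the proposal defers the actual content of the proof. The paper's argument is a five-case analysis of the profile of $\mu$ for connected skew left-justified shapes, plus separate treatment of straight, shifted, and skew shifted shapes, and each case names an explicit pair and verifies it is almost twin; your write-up explicitly postpones ``pinning down that selection rule,'' which is where essentially all of the difficulty lives. Your instincts are sound in places---the straight-shape pairs $(1,2),(2,1)$ and, for shifted shapes, $(1,3),(2,2)$ are exactly the paper's choices, and the diagonal of a shifted shape does require separate care---but as it stands the argument is a correct strategy resting on a false intermediate claim, with the case analysis left unexecuted.
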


\begin{proof}
Let $\lambda = (\lambda_1, \ldots, \lambda_k)$ and $\mu=(\mu_1,\ldots, \mu_l)$.
Assume first that $\mu$ is empty. We will show that $P_\lambda$  has an almost twin pair of elements and so, by Theorem~\ref{thm:asymmetricpair},  is $1/3$-balanced.

When $\lambda$ is left-justified, let $x$ correspond to the $(1,2)$ cell and $y$ correspond to the $(2,1)$ cell of $\lambda$. 
Both of these cells must exist in $\la$ since $P_\la$ is not a chain.  It is now easy to verify that $(x,y)$ is an almost twin pair.
If $\lambda$ is a shifted, then $\lambda_1\geq 3$ and $\la_2\ge1$, as $P_\lambda$ is not a chain. Let $x$ correspond to the $(1,3)$ cell and $y$ correspond to the $(2,2)$ cell. Again, $(x,y)$ is an almost twin pair of elements in $P_\lambda$.

\begin{figure}[t!]
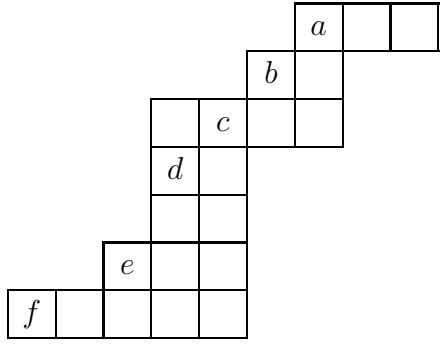

\centering
	\begin{minipage}{\textwidth}
		\centering
		\ytableaushort{\none\none\none\none\none\none a,\none\none\none\none\none b,\none\none\none\none c,\none\none\none d,\none,\none\none e,f}
		*{6+3,5+2,3+4,3+2,3+2,2+3,5}
		\caption{The skew diagram $(9,7^2,5^4)\ /\ (6,5,3^3,2)$}
		\label{fig:SkewForProof}
	\end{minipage}
\end{figure}

Next, we consider skew diagrams. If $\lambda/\mu$ is a disconnected diagram, observe that an almost twin pair in a connected component of $P_{\lambda/\mu}$ remains an almost twin pair in the entire poset. Therefore, we can assume $\lambda/\mu$ is a connected skew  diagram that does not correspond to a poset that is a chain. First consider  skew left-justified diagrams. By removing any empty columns on the left of the diagram, we can assume without loss of generality that $k\geq l+1$. For ease in discussing the first and last rows of $\mu$, define $\mu_0 = \lambda_1$ and $\mu_{l+1} = 0$. We have the following cases:

\begin{enumerate}[(i)]
\item If there exists $i \in [l]$ such that 
\[ \mu_{i-1}-1 \geq \mu_i = \mu_{i+1}+1 \]
then $(i,\mu_i+1)$ and $(i+1,\mu_{i+1}+1)$ is an almost twin pair.   For an example of this case, see the pair $(a,b)$ in Figure~\ref{fig:SkewForProof}.

\item If there exists $i\in [l-1]$ such that
\[ \mu_{i-1}-2 \geq \mu_i = \mu_{i+1}  \]
then $(i, \mu_{i}+2)$ and $(i+1,\mu_{i+1}+1)$ is an almost twin pair. Note that $(i,\mu_i+2)$ exists in the diagram since $\lambda/\mu$ is connected. For an example of this, see the pair $(c,d)$ in Figure~\ref{fig:SkewForProof}.

\item If $k=l+1$ and $\mu_{l-1} -1 \geq \mu_l$, then $(l,\mu_l+1)$ and $(l+1,1)$ are an almost twin pair. For an example of this, see the pair $(e,f)$ in Figure~\ref{fig:SkewForProof}.

\item If $k\geq l+2$ and $\mu_l\geq 2$, then $(l+1,2)$ and $(l+2,1)$ are an almost twin pair. Notice this is similar to case (ii), only it occurs at the bottom of the skew diagram.
\end{enumerate}

We can now decide what types of diagrams do not fall into cases (i)-(iv) above. We claim that any remaining diagram has $\mu$ of the form 
\[(s^{m_1}, (s-1)^{m_2}, \ldots, (s-p+1)^{m_p})\]
where $1\le p\le s$ and $m_i\geq 2$ for all $i\in [p]$. We call this case (v). 

\begin{figure}[t!]
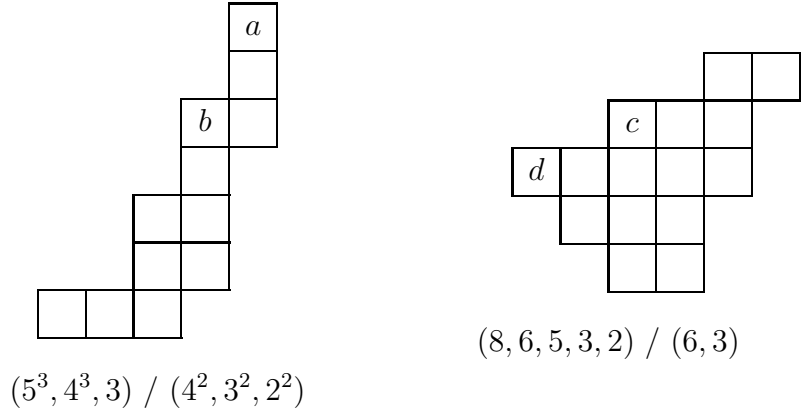

\centering
	\begin{minipage}{.3\textwidth}
		\centering
		\ytableaushort{\none\none\none\none a,\none, \none\none\none b\none, \none, \none, \none, \none\none\none}
		*{4+1,4+1,3+2,3+1,2+2,2+2,3}
		\[ (5^3,4^3,3)\ /\ (4^2,3^2,2^2) \]		
	\end{minipage} 
\hspace{6mm}
	\begin{minipage}{.32\textwidth}
		\centering
		\ytableaushort{\none\none\none\none\none\none, \none\none\none\none c, \none\none d}
			*{6+2,4+3,2+5,3+3,4+2}
		\[ (8,6,5,3,2)\ /\ (6,3) \]
	\end{minipage}
	\caption{A skew left-justified diagram and a skew shifted diagram}
	\label{fig:SkewCase}
\end{figure}

To verify the claim, note that all consecutive $\mu_i$ values differ by $1$ or $0$ since if there is some $r$ with $\mu_{r-1} -2\geq \mu_r$, then to avoid cases (i) and (ii) above, it must be that $\mu_{s-1}-2\geq \mu_s$ for all $s\in [r,l+1]$. In particular, this means $\mu_l \ge 2$, and this diagram will fall into case (iii) or (iv). So, any consecutive $\mu_i$ values differ by $1$ or $0$. Further, if $m_i = 1$ for any $i\in [p]$, the diagram would fall into case (i). Hence, $\mu$ must have the form above.

It also must be true that $\lambda/\mu$ has $\lambda_1 = \mu_1+1$, in order to avoid case (ii) above. An example  of a diagram $\lambda/\mu$ that does not fall into cases (i)-(iv) is given in Figure~\ref{fig:SkewCase}. 
In these remaining diagrams, $(1,\mu_1+1)$ and $(m_1+1, \mu_{(m_1+1)} +1)$ is an almost twin pair. An example of such a pair is $(a,b)$ in Figure~\ref{fig:SkewCase}. Hence every skew left-justified diagram $\lambda/\mu$ satisfies one of these five cases, and so $P_{\lambda/\mu}$ has an almost twin pair.

Finally, we consider the skew shifted diagrams. Notice that that the first $l$ rows of the diagram can be viewed as a skew left-justified diagram. Therefore, if any of the first $l-1$ rows are of the forms found in cases (i) or (ii), or if the first rows correspond to case (v), then the almost twin pairs in those cases remain almost twin in this poset, and we are done.

If none of cases (i), (ii), or (v) apply, then consider $\mu_l$. In particular, it must be the case that $\mu_l >1$, else case (ii) or (v) applies. If $\mu_l >3$, then the last $k-l$ rows of the diagram are a shifted diagram, and so we have the same almost twin pair as in the shifted case. If $\mu_l \in \{2,3\}$, then $(l, \mu_l+l)$ and $(l+1, l+1)$ are an almost twin pair, as seen by the pair $(c,d)$ in Figure~\ref{fig:SkewCase}. Hence, for skew diagrams $\lambda/\mu$, if $P_{\lambda/\mu}$ is not a chain, it has an almost twin pair of elements, as claimed. Hence, even when $\mu$ is not empty, we can always find an almost twin pair.
\end{proof}

%%%%%%%%%%%%%%%%%%%%%%%%%%%%%%%%%%

\section{Posets of Dimension $2$}\label{sec:Dim2}

The set of linear extensions $E(P)$ of a labeled poset $P$ with $n$ elements can be considered as a subset of the symmetric group $\fS_n$, where permutations are written in one-line notation. 
The \emph{dimension} of a poset is the least $k$ such that there is some $U\subseteq E(P)$ of size $k$ such that $\cap U = (P,\le)$. An equivalent definition is that the dimension of $P$ is the least $k$ such that $P$ can be embedded into the product $\bbN^k$ where $\bbN=\{0,1,2,\dots\}$.

We will be concentrating on posets of dimension $2$. Every such poset can be realized using a permutation $\pi=\pi_1\dots\pi_n$ since a poset of dimension $2$ can be obtained by intersecting the linear order in $\pi$ with the natural order $1<\dots<n$ on the integers. We will use $P_\pi$ to denote this poset.  Figure~\ref{fig:Dim2_example} displays the poset $P_\pi$ when $\pi=41325$.

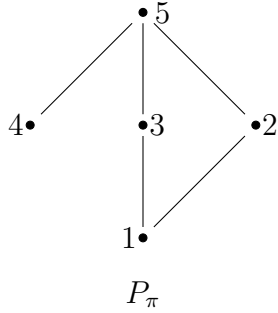
\begin{figure}
\centering
	\begin{minipage}{.35\textwidth}
		\centering
		\begin{tikzpicture}[scale=0.75]
		\filldraw (0,0) circle (2pt) node(v1){};
		\filldraw (2,2) circle (2pt) node(v2){};
		\filldraw (0,2) circle (2pt) node(v3){};
		\filldraw (-2,2) circle (2pt) node(v4){};
		\filldraw (0,4) circle (2pt) node(v5){};
		\draw (-0.25,0) node {$1$};
		\draw (2.25,2) node {$2$};
		\draw (0.25,2) node {$3$};
		\draw (-2.25,2) node {$4$};
		\draw (0.35,4) node {$5$};
		\node (x) at (0,-1) {$P_\pi$};
		\draw[-] (v5)--(v2)--(v1)--(v3)--(v5)--(v4);
		\end{tikzpicture}
	\end{minipage}
	\caption{The poset $P_\pi$ for $\pi=41325$}
	\label{fig:Dim2_example}
\end{figure}

To state our result, we will need some definitions from the theory of permutation patterns.  If $\pi$ and $\sigma$ are permutations then we say that $\pi$ {\em contains a copy of $\sigma$} if there is some subsequence of $\pi$ whose elements are in the same relative order as those of $\sigma$.  Otherwise, $\pi$ {\em avoids} $\sigma$.   For example, if $\pi=23154$ then $\pi$ contains the pattern $132$ because the subsequence $254$, like the pattern, has its smallest element first, its largest element second, and its middle-sized element last.  On the other hand, $\pi$ avoids $321$ since it does not contain a decreasing subsequence with three elements.  Given a subsequence $\pi'$ of $\pi$ we say that $\pi'$ is {\em contained in a copy of $\sigma$} if some copy of $\sigma$ in $\pi$ uses every element  of $\pi'$ (and perhaps others).  Otherwise, we say $\pi'$ {\em avoids} $\sigma$.  Note that $\pi'$ can avoid $\sigma$ even if $\pi$ contains it.  Returning to our example, $\pi'=14$ is contained in the pattern $132$ because of the subsequence $154$ of $\pi$.  But $\pi'$ avoids $123$ since none of the copies of $123$ in $\pi$ use the $1$.

Finally, define an {\em inversion} in $\pi=\pi_1\dots\pi_n$ to be a copy $\pi_i\pi_j$ of the pattern $21$.  Note that in this case some authors define the inversion to be the pair of corresponding indices $(i,j)$.

\begin{prop}\label{prop:perm_312_231}
Let $\pi=\pi_1\pi_2\ldots \pi_n$ be an element of $\mathfrak{S}_n$, and assume that $\pi$ has an inversion  $\pi_i\pi_j$ avoiding the patterns $312$ and $231$ in $\pi$. Then the pair $(\pi_i,\pi_j)$ is $1/2$-balanced in $P_\pi$.
\end{prop}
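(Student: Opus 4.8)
The plan is to find an explicit bijection on the linear extensions of $P_\pi$ witnessing $\P(\pi_i \prec \pi_j) = 1/2$, or more precisely to show that the map swapping the roles of $\pi_i$ and $\pi_j$ in a linear extension is a well-defined bijection on $E(P_\pi)$. Recall that $\pi_i\pi_j$ is an inversion, so as integers $\pi_i > \pi_j$, and in $P_\pi$ we have $\pi_j <_{P_\pi} \pi_i$ (the element with the smaller value but later position sits below). So the pair $(\pi_i,\pi_j)$ is comparable in $P_\pi$ unless $i<j$ — wait, an inversion means $i<j$ and $\pi_i>\pi_j$, so actually $\pi_i$ and $\pi_j$ are \emph{incomparable} in $P_\pi$: $\pi_i$ comes later in the value order but earlier in the position order. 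Good, so asking for $\P(\pi_i\prec\pi_j)$ makes sense. The key structural point I would isolate first: because $\pi_i\pi_j$ avoids $231$ and $312$ in $\pi$, there is no element $\pi_k$ which is ``between'' $\pi_i$ and $\pi_j$ in a way that would force an order between them in every linear extension. Concretely, avoiding $231$ means there is no $k$ with $i<k<j$ (wait, need to be careful) — I would spell out exactly: any element $z$ of $P_\pi$ that is comparable to both $\pi_i$ and $\pi_j$ must be comparable to them ``on the same side'' (either $z$ is below both or above both), since an element below one and above the other would be a copy of one of the two forbidden patterns together with $\pi_i,\pi_j$.

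The second step is to use this to define the involution. Given a linear extension $x_1x_2\cdots x_n$ of $P_\pi$, consider the positions of $\pi_i$ and $\pi_j$ in it; I want to swap them and argue the result is still a linear extension. This works provided no element $z$ lies strictly between them in the linear extension with $\pi_i <_{P_\pi} z <_{P_\pi} \pi_j$ or $\pi_j <_{P_\pi} z <_{P_\pi} \pi_i$ forced — but since $\pi_i$ and $\pi_j$ are incomparable, no such $z$ can be comparable to both from opposite sides by the structural claim, so any $z$ appearing between them in the extension is incomparable to at least one of $\pi_i,\pi_j$ and can be ``jumped over''. Actually the cleanest route: show directly that whenever $\pi_i$ precedes $\pi_j$ in a linear extension $L$, the word obtained by deleting $\pi_i$ and reinserting it immediately after $\pi_j$ is again a linear extension, and symmetrically; then check these two operations are mutually inverse, giving a bijection between extensions with $\pi_i\prec\pi_j$ and those with $\pi_j\prec\pi_i$. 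The forbidden-pattern hypothesis is exactly what guarantees the reinsertion is legal — I would verify that the only obstruction to moving $\pi_i$ past a block of elements sitting between it and $\pi_j$ is the presence of some $z$ with $\pi_i <_{P_\pi} z$ and $z <_{P_\pi} \pi_j$ (or the dual), and translate each such $z$ into a copy of $312$ or $231$ on $\{\pi_i, z, \pi_j\}$ in $\pi$.

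By equation~(\ref{e(P)}), once we have a bijection between $E(P_\pi + \pi_i\pi_j)$ and $E(P_\pi + \pi_j\pi_i)$ we get $e(P_\pi+\pi_i\pi_j) = e(P_\pi)/2$, hence $\P(\pi_i\prec\pi_j) = 1/2$ and $(\pi_i,\pi_j)$ is $1/2$-balanced.

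I expect the main obstacle to be the bookkeeping in the pattern-avoidance translation: turning ``there is an element $z$ comparable to both $\pi_i$ and $\pi_j$ from opposite sides'' into an honest statement about a length-three subsequence of $\pi$ realizing $231$ or $312$, and making sure I have correctly matched up which comparability direction in $P_\pi$ corresponds to which of the two forbidden patterns (this is where the convention ``intersect the order of $\pi$ with the natural order $1<\cdots<n$'' has to be used carefully, including the subtlety that $z$ need not be one of the $\pi_k$ with $i<k<j$ in position). A secondary point to handle cleanly is that $\pi'=\pi_i\pi_j$ is required only to avoid being \emph{contained in} a copy of $312$ or $231$, i.e.\ the forbidden third element must be outside the subsequence $\{\pi_i,\pi_j\}$; but the obstructing $z$ above is automatically a third, distinct element, so this is exactly the right hypothesis.
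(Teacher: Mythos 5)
Your high-level plan---exhibit an explicit bijection between the linear extensions with $\pi_i$ before $\pi_j$ and those with $\pi_j$ before $\pi_i$---is sound, but the structural fact you extract from the pattern hypothesis is too weak to carry it out. You claim only that no element $z$ is comparable to $\pi_i$ and $\pi_j$ \emph{from opposite sides}. What the hypothesis actually gives, and what the argument needs, is the full \emph{twin} property: $U_{\pi_i}=U_{\pi_j}$ and $L_{\pi_i}=L_{\pi_j}$, i.e.\ every element comparable to one of $\pi_i,\pi_j$ is comparable to both, in the same direction. (For instance, if $z>_{P_\pi}\pi_i$ but $z$ lies to the left of $\pi_j$ in $\pi$, then $\pi_i\,z\,\pi_j$ is a copy of $231$ containing $\pi_i\pi_j$; if $z>_{P_\pi}\pi_j$ but $z<\pi_i$ as integers, then $\pi_i\,\pi_j\,z$ is a copy of $312$; the two cases for the lower sets are dual.) Your weaker statement does not exclude an element $z$ with $z<_{P_\pi}\pi_j$ but $z$ incomparable to $\pi_i$; such a $z$ may sit between $\pi_i$ and $\pi_j$ in a linear extension, and then exchanging $\pi_i$ and $\pi_j$ puts $\pi_j$ before $z$ and destroys the extension. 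So the ``can be jumped over'' step is exactly where the gap lies.

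Your fallback map is also not a bijection: if $L=u\,\pi_i\,A\,\pi_j\,v$, then deleting $\pi_i$ and reinserting it just after $\pi_j$ gives $u\,A\,\pi_j\,\pi_i\,v$, and the symmetric operation applied to that returns $u\,A\,\pi_i\,\pi_j\,v\neq L$ whenever $A$ is nonempty; the map is not even injective, since $\pi_i\,a\,\pi_j$ and $a\,\pi_i\,\pi_j$ have the same image. The map that works is the in-place transposition of $\pi_i$ and $\pi_j$, an involution whose well-definedness requires precisely the twin property above. Once you have that property there is no need to hand-check the bijection at all: a twin pair gives an automorphism of $P_\pi$ swapping $\pi_i$ and $\pi_j$, and Corollary~\ref{cor:twinpair} together with Proposition~\ref{prop:auto2cycle} yields that $(\pi_i,\pi_j)$ is $1/2$-balanced. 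That is the paper's route: it proves $U_{\pi_i}=U_{\pi_j}$ and $L_{\pi_i}=L_{\pi_j}$ by the four-case pattern analysis sketched above and then invokes the twin-pair corollary.
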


Before we proceed to the proof, we can observe an example of this in Figure~\ref{fig:Dim2_example}. Note that $32$  is an inversion of $\pi = 41325$ which avoids $312$ and $231$ in $\pi$. So $(3,2)$ is a $1/2$-balanced pair in $P_\pi$. 

\begin{proof}
To simplify notation, let $\pi_i=y$ and $\pi_j=x$. Therefore, $\pi$ has the form 
\[ \pi = \pi_1 \cdots\ y\ \cdots\ x\ \cdots \pi_n \]
where $y>_\bbN x$. Now, since $yx$ avoids $312$ and $231$, there are no elements between $x$ and $y$ in $\pi$ that are larger than $y$ or smaller than $x$. Also, no elements to the right of $x$ or left of $y$ have values between $x$ and $y$. To put this description another way, if $yx$ avoids $312$ and $231$ in $\pi$, the elements between $y$ and $x$ in $\pi$ are exactly those in the set $\{a\mid x<_\bbN a<_\bbN y\}$.

We claim that $U_x=U_y$ and $L_x=L_y$ in $P_\pi$. We will show that $U_x = U_y$ as the proof of $L_x = L_y$ is nearly identical. If $z\in U_x$, then $z$ is to the right of $x$ in $\pi$ and thus also to the right of $y$ in $\pi$. Since $yx$ avoids $312$ in $\pi$ and $x<_\bbN z$, it must be that $y<_\bbN z$. Hence, $y<_P z$ and so $z \in U_y$.

If $z\in U_y$, then $z$ is to the right of $y$ in $\pi$ and $y<_\bbN z$. Since $yx$ avoids $231$ in $\pi$, then $z$ must also be to the right of $x$ in $\pi$. Also, $x<_\bbN y<_\bbN z$. Thus $x<_P z$, which means $z\in U_x$. Hence, we have that $U_x = U_y$.

Now, because $U_x = U_y$ and $L_x=L_y$, $(x,y)$ is a twin pair of elements. So, by Corollary~\ref{cor:twinpair}, $P_\pi$ is $1/2$-balanced as desired.
\end{proof}

Not every poset $P_\pi$ is $1/2$-balanced.  For example, if $\pi=  13572468$ then $P_\pi$ is isomorphic to the distributive lattice $J(P)$ in Fig.~\ref{fig:J234Chain}.  But  we have already noted that this lattice is not $1/2$-balanced.   
\begin{question}
Do posets of dimension $2$ satisfy Conjecture~\ref{conj}?
\end{question}

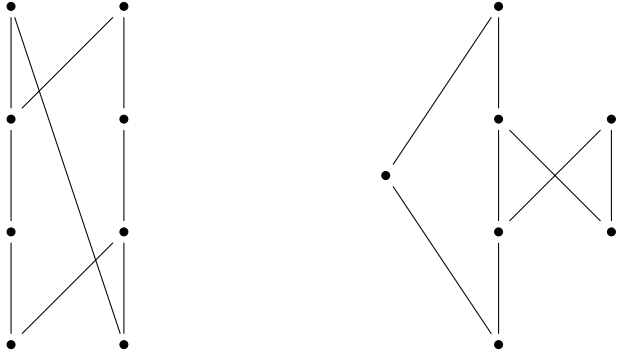
\begin{figure}
\centering
	\begin{minipage}{.3\textwidth}
	\centering
		\begin{tikzpicture}[scale=0.75]
		\filldraw (0,0) circle (2pt) node(v2){};
		\filldraw (2,0) circle (2pt) node(v1){};
		\filldraw (2,2) circle (2pt) node(v3){};
		\filldraw (2,4) circle (2pt) node(v5){};
		\filldraw (2,6) circle (2pt) node(v7){};
		\filldraw (0,2) circle (2pt) node(v4){};
		\filldraw (0,4) circle (2pt) node(v6){};
		\filldraw (0,6) circle (2pt) node(v8){};
		\draw[-] (v1)--(v3)--(v5)--(v7)--(v6)--(v4)--(v2)--(v3);
		\draw[-] (v1)--(v8)--(v6);
		\end{tikzpicture} 
	\end{minipage}
	\hspace{5mm}
	\begin{minipage}{.3\textwidth}
	\centering
	\begin{tikzpicture}[scale=0.75]
		\filldraw (0,0) circle (2pt) node(v1){};
		\filldraw (2,2) circle (2pt) node(v3){};
		\filldraw (2,4) circle (2pt) node(v7){};
		\filldraw (-2,3) circle (2pt) node(v5){};
		\filldraw (0,2) circle (2pt) node(v2){};
		\filldraw (0,4) circle (2pt) node(v4){};
		\filldraw (0,6) circle (2pt) node(v6){};
		\draw[-] (v1)--(v2)--(v7)--(v3)--(v4)--(v2);
		\draw[-] (v1)--(v5)--(v6)--(v4);
		\end{tikzpicture} 
	\end{minipage}
	\caption{Two posets with small balance constants}
	\label{fig:SmallDeltas}
\end{figure}

%%%%%%%%%%%%%%%%%%%%%%%%%%%%%%%%%%

\section{Posets with Small Balance Constants}\label{sec:SmallDeltas}

It would be of interest to characterize those posets whose balance constant is exactly $1/3$, or to see if there are posets satisfying Conjecture~\ref{conj} with $\delta(P)$ arbitrarily close to $1/3$.  For the second question, people have considered posets of width $2$ (where width is the largest cardinality of an antichain) because  Linial~\cite{Lin84} proved that these posets satisfy the conjecture.

\begin{thm}[\cite{Lin84}]~\label{thm:width2}
Let $(P,\leq)$ be a poset of width $2$. Then, $\delta(P) \geq 1/3$.\hfill \qed
\end{thm}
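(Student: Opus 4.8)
The plan is to follow Linial's route through the ``merging'' interpretation of width-$2$ posets. Since $P$ has width $2$, Dilworth's theorem lets us write its ground set as a disjoint union of two chains $A\colon a_1<\dots<a_p$ and $B\colon b_1<\dots<b_q$, and transitivity forces the cross-relations between $A$ and $B$ to form a ``staircase''. Consequently the linear extensions of $P$ are exactly the monotone lattice paths from $(0,0)$ to $(p,q)$ staying inside a staircase region $R\subseteq[0,p]\times[0,q]$, where the point $(i,j)$ records that $a_1,\dots,a_i,b_1,\dots,b_j$ have been output. Thus $e(P)$ counts these paths; a pair $(a_i,b_j)$ is incomparable precisely when the unit square with corners $(i-1,j-1),(i,j-1),(i-1,j),(i,j)$ lies in $R$; and $\P(a_i\prec b_j)$ is the fraction of paths whose $x$-coordinate at time $i+j-1$ is at least $i$. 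Writing $w_t(c)$ for the number of paths through the vertex of $x$-coordinate $c$ at time $t$, we have $\sum_c w_t(c)=e(P)$ and $\P(a_i\prec b_j)=\left(\sum_{c\ge i}w_{i+j-1}(c)\right)/e(P)$, so the probabilities along a fixed antidiagonal are partial sums of the weights. It is also convenient to first ``peel'': a poset with a unique minimal (or maximal) element has that element comparable to everything, and deleting it changes no $\P(x\prec y)$ with $x,y$ distinct from it and keeps the width $\le 2$; so we may assume $P$ has exactly two minimal and two maximal elements, the remaining small case being a two-element antichain with $\delta=1/2$.

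The second ingredient is monotonicity. Since $\{a_i\prec b_j\}\subseteq\{a_i\prec b_{j+1}\}$ and $\{a_{i+1}\prec b_j\}\subseteq\{a_i\prec b_j\}$, the function $(i,j)\mapsto\P(a_i\prec b_j)$ is non-decreasing in $j$ and non-increasing in $i$ over the strip of incomparable pairs. Near the ``top-left'' of this strip the probability is close to $1$ and near the ``bottom-right'' it is close to $0$; for instance $\P(a_1\prec b_1)=e(P\setminus\{a_1\})/e(P)$ and $\P(b_1\prec a_1)=e(P\setminus\{b_1\})/e(P)$ sum to $1$. Now run a discrete intermediate-value argument: if $\P(a_1\prec b_1)\in[1/3,2/3]$ we are done; otherwise assume (swapping $A$ and $B$) that $\P(a_1\prec b_1)>2/3$ and walk through incomparable pairs, at each stage either increasing $i$ by $1$ or decreasing $j$ by $1$ while staying in the strip, always choosing a direction that decreases $\P$, until reaching a pair with probability $<1/3$. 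Because the target interval $[1/3,2/3]$ has length exactly $1/3$, the walk misses it only if a single step drops $\P$ from above $2/3$ to below $1/3$, i.e.\ a step of size exceeding $1/3$ (including the possibility that the ``next'' pair in a chosen direction has become comparable, giving probability $0$).

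The main obstacle, which is the real content of the theorem, is to rule out such an oversized step or to turn it into information. A step sliding the split point along a fixed antidiagonal changes $\P$ by exactly one vertex weight $w_t(c)/e(P)$, while a step moving one element by one rung of its chain changes $\P$ by the probability that a prescribed element of the other chain falls into a prescribed unit gap; in either reading a step of size $>1/3$ asserts that one vertex of $R$ (respectively one ``sandwich'' configuration) carries more than a third of all linear extensions. I would argue, by a short case analysis on the shape of $R$ near such a heavy vertex, that whenever this occurs some other incomparable pair already has probability in $[1/3,2/3]$ — intuitively, a vertex holding more than a third of the paths forces the antidiagonals flanking it to spread the remaining paths out enough for a balanced split to appear there. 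Carrying out this bookkeeping, together with the freedom at each step to pick which of the two decreasing directions to take, is exactly Linial's argument; it is also where the precise constant $1/3$ is consumed, and the poset $T$ of Figure~\ref{fig:3elem1relation} shows the resulting bounds admit no slack.
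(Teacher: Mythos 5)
First, a contextual note: the paper does not prove this theorem; it is quoted from Linial~\cite{Lin84} and used as a black box, so there is no in-paper argument to compare yours against. Judged on its own terms, your write-up correctly assembles the standard framework: the Dilworth decomposition into two chains, the bijection between linear extensions and monotone lattice paths in a staircase region $R$, the identity $\P(a_i\prec b_j)=\bigl(\sum_{c\ge i}w_{i+j-1}(c)\bigr)/e(P)$, the monotonicity of $(i,j)\mapsto\P(a_i\prec b_j)$, and the peeling of unique minimal/maximal elements are all right, and the discrete intermediate-value walk is the right skeleton.

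However, there is a genuine gap at exactly the point you yourself flag as ``the real content of the theorem.'' The entire difficulty is to show that the walk cannot jump from above $2/3$ to below $1/3$ in a single step, i.e., to handle a vertex of $R$ carrying more than $e(P)/3$ of all paths. You assert that ``a short case analysis on the shape of $R$ near such a heavy vertex'' shows some other incomparable pair is then balanced, with the intuition that a heavy vertex forces the flanking antidiagonals to spread the remaining weight. That claim is not substantiated, and it is not obviously true: the weight of a heavy vertex redistributes to at most two neighbours on the adjacent antidiagonal, and near the boundary of the staircase (where a neighbour may be missing because the corresponding pair has become comparable, or where the strip of incomparable pairs narrows to nothing) the partial sums on the flanking antidiagonals can again jump by essentially the full weight of the heavy vertex. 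Closing this requires genuine quantitative bookkeeping --- comparing the oversized jump on antidiagonal $t$ with the two partial sums it splits into on antidiagonals $t\pm1$, exploiting the Pascal-type recurrence for the path counts inside $R$, and treating the boundary cases separately --- and that is precisely where the constant $1/3$ is earned; the poset $T$ of Figure~\ref{fig:3elem1relation} shows there is no slack to absorb a sloppy estimate. As written, your proposal is a correct reduction of the theorem to its hardest lemma, not a proof of the theorem.
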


Aigner~\cite{Aig85} showed that posets of width $2$ fit into one of two categories: either the poset is a linear sum of copies of the singleton poset and $T$ (the poset from Figure~\ref{fig:3elem1relation}); or the poset has an $\alpha$-balanced pair with $1/3< \alpha< 2/3$. In fact, the only known posets that have a balance constant of $1/3$ are the linear sums of singletons and $T$. The poset of width $2$ in Figure~\ref{fig:SmallDeltas} has a balance constant of $\frac{16}{45}\approx 0.3556$, and until recently, it was the poset with the smallest known balance constant greater than $1/3$~\cite{Bri99}.  Using computer search, we have found posets of width $2$ that have balance constants closer to $1/3$. In Figure~\ref{fig:SmallestDeltas}, the three posets have balance constants $\delta(A) = \frac{6}{17}\approx 0.35294$,  $\delta(B) = \frac{60}{171} \approx 0.350877$, and  $\delta(C) = \frac{37}{106} \approx 0.349057$.

The smallest known balance constant for a poset with width strictly greater than 2 is $\frac{14}{39} \approx 0.3590$, as described in~\cite{Bri99}. It belongs to the poset with $7$ elements in Figure~\ref{fig:SmallDeltas}. A computer search of all posets with up to $9$ elements revealed no posets with balance constant smaller that $\frac{14}{39}$ and width greater than 2. These observations raise the following questions.

\begin{figure}[t!]
\centering
	\begin{minipage}{.3\textwidth}
	\centering
		\begin{tikzpicture}[scale=0.75]
		\filldraw (0,0) circle (3pt) node(v1){};
		\filldraw (0,2) circle (3pt) node(v2){};
		\filldraw (0,4) circle (3pt) node(v7){};
		\filldraw (0,6) circle (3pt) node(v8){};
		\filldraw (0,8) circle (3pt) node(v9){};
		\filldraw (2,1) circle (3pt) node(v3){};
		\filldraw (2,3) circle (3pt) node(v4){};
		\filldraw (2,5) circle (3pt) node(v5){};
		\filldraw (2,7) circle (3pt) node(v6){};
		\node (va) at (1,-1) {$A$};
		\draw[-] (v4)--(v1)--(v2)--(v7)--(v8)--(v9)--(v5);
		\draw[-] (v3)--(v4)--(v5)--(v6);
		\end{tikzpicture} 
	\end{minipage}
	\hspace{5mm}
	\begin{minipage}{.3\textwidth}
	\centering
	\begin{tikzpicture}[scale=0.75]
		\filldraw (0,0) circle (3pt) node(v1){};
		\filldraw (0,2) circle (3pt) node(v2){};
		\filldraw (0,4) circle (3pt) node(v4){};
		\filldraw (0,6) circle (3pt) node(v7){};
		\filldraw (0,8) circle (3pt) node(v9){};
		\filldraw (0,10) circle (3pt) node(v11){};
		\filldraw (2,1) circle (3pt) node(v3){};
		\filldraw (2,3) circle (3pt) node(v5){};
		\filldraw (2,5) circle (3pt) node(v6){};
		\filldraw (2,7) circle (3pt) node(v8){};
		\filldraw (2,9) circle (3pt) node(v10){};
		\node (vb) at (1,-0.5) {$B$};
		\draw[-] (v3)--(v5)--(v7)--(v9)--(v6)--(v5)--(v1)--(v2)--(v6);
		\draw[-] (v2)--(v4)--(v8)--(v10);
		\draw[-] (v6)--(v8)--(v11)--(v9);
		\draw[-] (v4)--(v7);
		\end{tikzpicture} 
	\end{minipage}
	\hspace{5mm}
	\begin{minipage}{.3\textwidth}
	\centering
	\begin{tikzpicture}[scale=0.75]
		\filldraw (0,0) circle (3pt) node(v2){};
		\filldraw (0,2) circle (3pt) node(v4){};
		\filldraw (0,4) circle (3pt) node(v6){};
		\filldraw (0,6) circle (3pt) node(v8){};
		\filldraw (0,8) circle (3pt) node(v10){};
		\filldraw (2,0) circle (3pt) node(v1){};
		\filldraw (2,2) circle (3pt) node(v3){};
		\filldraw (2,4) circle (3pt) node(v5){};
		\filldraw (2,6) circle (3pt) node(v7){};
		\filldraw (2,8) circle (3pt) node(v9){};
		\node (vc) at (1,-1) {$C$};
		\draw[-] (v1)--(v3)--(v5)--(v7)--(v9)--(v8)--(v3)--(v2);
		\draw[-] (v2)--(v4)--(v5);
		\draw[-] (v4)--(v6)--(v8)--(v10);
		\draw[-] (v6)--(v7);
		\end{tikzpicture} 
	\end{minipage}
	\caption{Posets with the smallest balance constants greater than $1/3$}
	\label{fig:SmallestDeltas}
\end{figure}
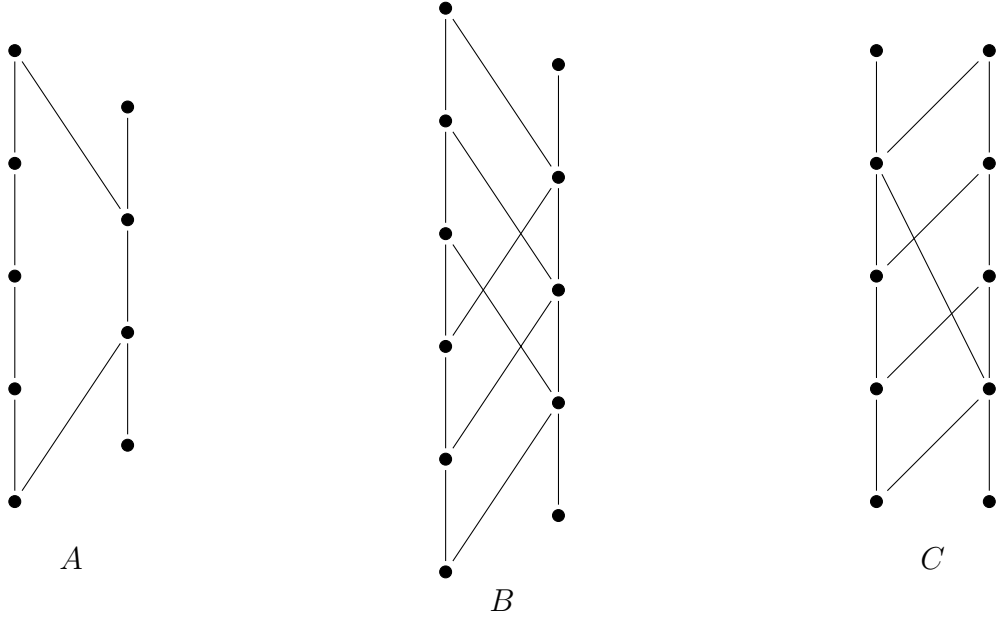

\begin{question}
Can one find a sequence of posets whose balance constants approach $1/3$?  If $P$ has width $w$, is there always a poset $Q$ of smaller width such that $\delta(Q)<\delta(P)$?
\end{question}

%%%%%%%%%%%%%%%%%%%%%%%%%%%%%%%%
%
% 	
%
%%%%%%%%%%%%%%%%%%%%%%%%%%%%%%%%

%\bibliographystyle{plain}

%\nocite{*}
%\bibliographystyle{abbrvnat}

\nocite{*}
\bibliographystyle{alpha}

\bibliography{thesisbib}

\begin{thebibliography}{{Zag}16}

\bibitem[Aig85]{Aig85}
Martin Aigner.
\newblock A note on merging.
\newblock {\em Order}, 2(3):257--264, 1985.

\bibitem[BFT95]{BFT95}
Graham Brightwell, S.~Felsner, and W.~T. Trotter.
\newblock Balancing pairs and the cross product conjecture.
\newblock {\em Order}, 12(4):327--349, 1995.

\bibitem[Bri89]{Bri89}
Graham Brightwell.
\newblock Semiorders and the 1/3--2/3 conjecture.
\newblock {\em Order}, 5(4):369--380, 1989.

\bibitem[Bri99]{Bri99}
Graham Brightwell.
\newblock Balanced pairs in partial orders.
\newblock {\em Discrete Mathematics}, 201(1-3):25--52, 1999.

\bibitem[Fre76]{Fre76}
Michael~L. Fredman.
\newblock How good is the information theory bound in sorting?
\newblock {\em Theoretical Computer Science}, 1(4):355 -- 361, 1976.

\bibitem[GHP87]{GHP}
B.~Ganter, G.~Hafner, and W.~Poguntke.
\newblock On linear extensions of ordered sets with a symmetry.
\newblock {\em Discrete Mathematics}, 63:153--156, 1987.

\bibitem[Kis68]{Kis68}
S.S Kislitsyn.
\newblock Finite partially ordered sets and their associated sets of
  permutation.
\newblock {\em Matematicheskiye Zametki}, 4:511--518, 1968.

\bibitem[KS84]{KS84}
Jeff Kahn and Michael Saks.
\newblock Balancing poset extensions.
\newblock {\em Order}, 1(2):113--126, 1984.

\bibitem[Lin84]{Lin84}
Nathan Linial.
\newblock The information-theoretic bound is good for merging.
\newblock {\em SIAM Journal on Computing}, 13(4):795 -- 801, 1984.

\bibitem[Pec06]{Pec06}
Marcin Peczarski.
\newblock The gold partition conjecture.
\newblock {\em Order}, 23(1):89--95, 2006.

\bibitem[Pec08]{Pec08}
Marcin Peczarski.
\newblock The gold partition conjecture for 6-thin posets.
\newblock {\em Order}, 25(2):91--103, 2008.

\bibitem[TGF92]{TGF92}
W.~T. Trotter, W.~G. Gehrlein, and P.~C. Fishburn.
\newblock Balance theorems for height-2 posets.
\newblock {\em Order}, 9(1):43--53, 1992.

\bibitem[Zag12]{Zag12}
Imed Zaguia.
\newblock The $1/3-2/3$ conjecture for $n$-free ordered sets.
\newblock {\em Elec. J. Combinatorics}, 19(2):1--5, 2012.

\bibitem[{Zag}16]{Zag16}
I.~{Zaguia}.
\newblock {The 1/3-2/3 Conjecture for ordered sets whose cover graph is a
  forest}.
\newblock {\em ArXiv}, 1610.00809, 2016.

\end{thebibliography}

\end{document}